\newtheorem{theorem}{Theorem}[section]
\newtheorem{corollary}[theorem]{Corollary}
\newtheorem{lemma}[theorem]{Lemma}
\newtheorem{definition}[theorem]{Definition}
\newtheorem{proposition}[theorem]{Proposition}
 \theoremstyle{remark}
\numberwithin{equation}{section}
\def\cen{\centerline}
\def\n{\noindent}
\def\al{\alpha}
\def\la{\lambda}
\def\va{\varphi}
\def\cv{\mathcal C(\overline{D})}
\def\C{\mathbb C}
\def\Om{\Omega}
\def\fr{\frac}
\def\F{\mathcal F}
\def\ov{\overline}
\def\om{\omega}
\def\ve{\varepsilon}
\def\de{\delta}
\def\nhd{neighborhood}
\def\no{\noindent}
\begin{document}
\title {Approximation of $m-$subharmonic functions
	 on bounded domains in $\mathbb C^n$}
\author{Nguyen Quang Dieu, Dau Hoang Hung, Hoang Thieu Anh and Sanphet Ounheuan}
\address{Department of Mathematics, Hanoi National University of Education,
136 Xuan Thuy street, Cau Giay, Hanoi, Vietnam}
\email{dieu$\_vn$@yahoo.com.com, dhhungk9@yahoo.com,} 
	
\email {hoangthieuanh@gmail.com, sanphetMA@gmail.com}
\subjclass[2000]{Primary 32U15; Secondary 32B15}
\keywords{plurisubharmonic functions, $m$-subharmonic function}
\date{\today}
\maketitle
\no
{\bf Abstract.} Let $D$ be a bounded domain in $\mathbb C^n$.
 We study approximation of (not necessarily bounded from above) $m-$subharmonic function $D$ by continuous $m-$subharmonic ones defined on \nhd s of $\ov{D}$. We also consider the existence of a $m-$subharmonic function on $D$ whose boundary values coincides with a given real valued continuous function on $\partial D$ except for a sufficiently small subset of $\partial D.$
 \section{Introduction}
Subharmonic functions and plurisubharmonic functions are fundamental notions in potential theory and pluripotential theory respectively. The theory of $m-$subharmonic ($m-$sh., for short) function was introduced and investigated thoroughly since the seminal work [B\l2]. This new class of functions encompasses the subharmonic and plurisubharmonic ones naturally. The definition of $m-$sh function is however a bit technical.
Let $D$ be an open subset of $\mathbb C^n,$ and $u$ be a subharmonic function defined on $D, u \not\equiv-\infty.$
We say that $u$ is $m-$subharmonic 
($m-$sh. for short) if the $(1,1)$ current $dd^c u$ is $m-$positive in the weak sense, i.e., for 
$\eta_1, \cdots, \eta_{m-1} \in \hat \Gamma_m$ we have
$$dd^c u \wedge \eta_1 \wedge \cdots \wedge \eta_{m-1} \wedge \om^{n-m} \ge 0.$$
Here we define
$$\hat \Gamma_m:=\{\eta \in \mathcal C_{1,1}: \eta \wedge \om^{n-1} \ge 0, \cdots, \eta^m \wedge \om^{n-m}\ge 0\},$$
where $\om=dd^c \vert z\vert^2$ is the standard K\"ahler form and $\mathcal C_{1,1}$
denotes the space of (1, 1)-forms with
constant coefficients. Moreover, $u$ is said to be {\it strictly} $m-$sh. on $D$ if for each relatively compact subdomain $D'$ in $D$ there exists a constant $M>0$ such that 
$u-M\vert z\vert^2$ is $m-$sh. on $D'.$

Thus, in the case $m=1$ or $m=n$ we recover the classes of subharmonic 
and plurisubharmonic functions respectively. 
We write $SH_m(D)$ for the set of $m-$sh functions on $D$.
In this paper, we address the question of approximating an element $u \in SH_m (D)$ by a sequence $\{u_j\}$ of  continuous $m-$sh functions on {\it \nhd s} of $\ov{D}.$ 
If we ask for continuity and $m-$subharmonicity of $u_j$ only on $D$ then our problem has a satisfactory answer if $D$ enjoys certain convexity condition. 
Namely we have the following result which reduces to a classical approximation theorem of Fornaess and Narasimhan in the case where $m=n.$
\begin{theorem}\label{fn}
Assume that $D$ has a $m-$sh. exhaustion function $\va$, i.e., $\{\va<c\}$ is relatively compact in $D$ for all 
$c \in \mathbb R.$ Then for every $u \in SH_m (D)$, there exists a sequence
$\{u_j\}$ of smooth strictly $m-$sh functions on $D$ such that $u_j \downarrow u$ on $D.$
\end{theorem}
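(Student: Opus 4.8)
\no The proof should follow, step for step, the patching argument of Fornaess and Narasimhan for the case $m=n$; the plan is to carry it over using only the following elementary stability properties of the class $SH_m$, each immediate from the definition through $m$-positivity of currents. If $u\in SH_m(D)$ and $\rho_\ve$ is a standard radial mollifier, then $u*\rho_\ve$ is smooth and lies in $SH_m(D_\ve)$, where $D_\ve=\{z\in D:\ \mathrm{dist}(z,\pa D)>\ve\}$; moreover $u*\rho_\ve$ is non-decreasing in $\ve$ and decreases to $u$ on $D$ as $\ve\downarrow0$ (the last two assertions only use that $m$-sh functions are subharmonic, so that the sub-mean value inequality is available). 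The class $SH_m$ is stable under addition, under multiplication by positive constants, and under composition $g\mapsto\chi(g_1,\dots,g_p)$ with $\chi$ convex and non-decreasing in each variable — in particular under the regularized maximum $M_\eta$, which I will use freely together with its standard properties ($M_\eta\ge\max$, $M_\eta\to\max$ as $\eta\downarrow0$, agreement with $\max$ where the arguments are $\eta$-separated, and invariance under adding a common function to all arguments). Finally, $u+\ve|z|^2$ is strictly $m$-sh whenever $u\in SH_m$. Every place where the Fornaess--Narasimhan argument invokes plurisubharmonicity it in fact uses only one of these, so the scheme transfers.

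First I would replace $\va$ by a \emph{smooth, strictly} $m$-sh exhaustion. Since $\va$ is upper semicontinuous with relatively compact sublevel sets, choose reals $c_1<c_2<\cdots\to\infty$ with $\ov{\Om_k}\Sub\Om_{k+1}$ and $\bigcup_k\Om_k=D$, where $\Om_k:=\{\va<c_k\}$. On a \nhd\ of each $\ov{\Om_{k+1}}$ the function $\va*\rho_{\de_k}+\de_k|z|^2$ is smooth, strictly $m$-sh, and dominates $\va$, hence exceeds $c_k$ outside $\Om_k$; gluing these pieces by a locally finite regularized maximum $M_\eta(\dots,\ \va*\rho_{\de_k}+\de_k|z|^2-a_k,\ \dots)$ with $\de_k\downarrow0$ and $a_k\to\infty$ slowly relative to $c_k$ produces a smooth strictly $m$-sh exhaustion of $D$. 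Renaming, we may assume from now on that $\va$ itself is smooth and strictly $m$-sh and that $\Om_k=\{\va<k\}$.

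Then, given $u\in SH_m(D)$, I would build the approximants $u_j$ following Fornaess--Narasimhan: choose radii $\ve_{j,k}$, decreasing in $k$ with $\ve_{j,k}\to0$ also as $j\to\infty$, so small that $u*\rho_{\ve_{j,k}}$ is smooth $m$-sh on a \nhd\ of $\ov{\Om_{k+2}}$, and define $u_j$ by patching the mollifications $u*\rho_{\ve_{j,k}}+\de_j|z|^2$ together, from the inside out, via regularized maxima in which each mollification is combined with the exhaustion $\va$ (to extend it across $\pa D$ while keeping it smooth and strictly $m$-sh) and with suitable additive normalizations and convex barriers built from $\va$, so that the $k$-th mollification is the dominant term on roughly the shell $\ov{\Om_{k+1}}\setminus\Om_k$. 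The parameters are to be chosen so that: the resulting $u_j$ is a well-defined, locally finite regularized maximum, hence smooth; the summand $\de_j|z|^2$ (via the invariance of $M_{\eta_j}$ under common shifts) makes $u_j$ strictly $m$-sh; each piece is $\ge u$ on the relevant shell, so $u_j\ge u$; with $\ve_{j+1,k}\le\ve_{j,k}$, $\de_{j+1}\le\de_j$, $\eta_{j+1}\le\eta_j$ and the barriers for step $j+1$ dominated by those for step $j$, the sequence $(u_j)$ is decreasing; and, on each $\Om_k$, $u_j$ is squeezed between $u$ and $u*\rho_{\ve_{j,k+1}}+o(1)$, which forces $u_j\downarrow u$ pointwise since $u*\rho_{\ve_{j,k}}\downarrow u$ as $j\to\infty$.

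The main obstacle, both in the reduction and (more delicately) in the construction of the $u_j$, is precisely this bookkeeping: one must choose the levels $c_k$, the radii $\ve_{j,k}$, the convex barriers, the separation parameters in the regularized maxima, and the quadratic perturbations so that \emph{all of the following hold simultaneously} — only finitely many arguments are active near each point (so each $M_\eta$ is smooth there); no mollification is ever evaluated outside its domain, which requires that it be dominated by $\va$ before one reaches $\pa D_{\ve_{j,k}}$, and here the possible unboundedness of $u$ near $\pa D$ must be absorbed into the choice of $\va$ and the barriers; the sequence $(u_j)$ is decreasing; and $u_j\to u$ pointwise. All the analytic content (preservation of $m$-subharmonicity, smoothness and monotone convergence of the mollifications, strictness after a quadratic perturbation) is routine and needs nothing beyond the stability properties above; for this theorem the difference between the $m$-subharmonic and the plurisubharmonic settings is purely nominal.
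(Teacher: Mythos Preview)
Your proposal is correct and follows the same two-step scheme as the paper: first upgrade the given exhaustion to a smooth strictly $m$-sh one, then transplant the Fornaess--Narasimhan patching argument using only the elementary stability properties of $SH_m$ collected in Lemma~\ref{prop}. The only cosmetic difference is that the paper builds the smooth exhaustion via H\"ormander's convex-sum construction and then cites Richberg's lemma (Lemma~\ref{rich}) together with Theorem~5.5 of [FN], whereas you glue with regularized maxima throughout; neither version writes out the bookkeeping you flag at the end.
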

On the the hand, under the additional condition that $u$ is {\it bounded} from above on $D$ then the above problem may be approached by the use of Jensen measures associated to certain convex cones in $SH_m (D).$ 
The first results in this direction, in the special case $m=n,$ are due to Wikstr\"om in [Wik] and then in [DW] and [Di]. 
In this work, we will exploit further the techniques developed in [Wik], [DW] and [Di] to work with the case where $u$ is not necessarily bounded from above on $D$. 
For the reader convenience, we will first sketch the general approach in the case 
$u \in SH_m (D)$ with $\sup\limits_D u<\infty.$
Then, after subtracting a large constant we may assume $u<0$ on $D$.
Now let $SH_m^{-} (D)$ be the convex cone of non-positive upper semicontinuous functions on $\ov{D}$ which are $m-$sh on $D$ and $SH_m^* (D)$ be the sub-cone of $SH^{-}_m (D)$ that consists of restrictions on $\ov{D}$
of {\it continuous} $m-$sh functions on \nhd s of $\ov{D}.$
Then, by a general duality theorem of Edwards we may  express upper envelopes of plurisubharmonic functions  taken in $SH^{-}_m (D), SH_m^* (D)$ in terms of  Jensen measures with respect to these cones. The general principle is that the approximation of elements in $SH^{-}_m (D)$ by elements in the smaller cone 
$SH^*_m (D)$ is possible when we have equality of the sets of Jensen measures with respect to these cones. 
In order to formulate our results properly, it is convenient to introduce the following notions pertaining to our work.
\begin{definition}\label{def2}
For a point $z \in \ov D$, we define below two  classes  of  Jensen  measures.
$$\begin{aligned}
&J_{m,z}:= \{ \mu \in \mathcal{B}(\ov D): u(z) \le  \int\limits_{\ov D} ud\mu,  \forall  u \in SH_m^{-}(D)\};\\
&J^c_{m,z}:= \{\mu \in \mathcal{B}(\ov D): u(z) \le  \int\limits_{\ov D} ud\mu,  
\forall  u \in SH_m^* (D)\};
\end{aligned}
$$
where $\mathcal B(\ov D)$ denotes the class of positive, regular Borel measures on $\ov{D}$.
\end{definition}
\n
{\bf Remarks.}
1. If $\xi \in \partial D$ then $J_{m,\xi}=\{\delta_\xi\}.$ 
This is seen by applying Jensen's inequality to the element $u \in SH_m^{-}(D)$ defined by $u(\xi)=0$ whereas $u=-1$ on $\ov{D} \setminus \{\xi\}$.

\noindent
2. For $z \in D$, let $L$ be an affine complex subspace of dimension $n-m+1$ passing through $z, \mathbb B \subset L$ be an open ball centered at $z$ and relatively compact in $L \cap D.$ Then the normalized Lebesgue measure on $\partial \mathbb B$ belongs to 
$J_{m,z}$. This follows directly from Lemma \ref{prop} (h) in the next section and the mean value inequality for subharmonic functions.

\noindent
3. It is obvious that $J_{m,z} \subset J^c_{m,z}$. If $D$ is {\it homogeneous}, i.e., for $p, q \in D$ there exists
an automorphism $\va: D \to D$ sending $p$ to $q$ and extends to a homeomorphism from $\ov{D}$ onto $\ov{D}$, then the set
$\{z \in D: J_{n,z}=J^c_{n,z}\}$ equals either $D$ or the empty set.
\vskip0,2cm
\noindent
The connection between Jensen measures and approximation of $m-$sh functions stems from the following fact which is a simple consequence of Fatou's lemma.
\begin{proposition}\label{pro1}
Let $D \subset \mathbb C^n$ be a bounded domain and $E$ be a subset of $D$. Assume that for every $u \in SH_m^{-}(D)$, 
there exists $\{u_j\}_{j \ge 1} \subset SH_m^{*}(D)$ having the following properties:

\n
(i) $u_j \to u$ pointwise on $E$.

\n
(ii) $\varlimsup\limits_{j \to \infty} u_j \le u$ on $\ov{D}.$

Then $J_{m,z}=J^c_{m,z}$ for every $z \in E.$
\end{proposition}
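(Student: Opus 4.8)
The plan is to prove the only non-trivial inclusion, namely $J^c_{m,z} \subset J_{m,z}$ for every $z \in E$; the reverse inclusion $J_{m,z} \subset J^c_{m,z}$ holds for all of $\overline D$ by Remark 3, since $SH_m^*(D) \subset SH_m^-(D)$. So fix $z \in E$ and $\mu \in J^c_{m,z}$, and fix an arbitrary $u \in SH_m^-(D)$; we must check that $u(z) \le \int_{\overline D} u\, d\mu$.

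Before passing to the limit I would record two harmless preliminaries. First, $\mu$ is a \emph{finite} measure: testing the defining inequality for $J^c_{m,z}$ against the constant function $-1$, which clearly belongs to $SH_m^*(D)$, gives $\mu(\overline D) \le 1$. Second, $\int_{\overline D} u\, d\mu$ is well defined in $[-\infty,0]$, because $u$ is upper semicontinuous on the compact set $\overline D$ and $u \le 0$ there.

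Now invoke the hypothesis: there is a sequence $\{u_j\}_{j\ge 1} \subset SH_m^*(D)$ with $u_j \to u$ pointwise on $E$ and $\varlimsup_{j\to\infty} u_j \le u$ on $\overline D$. Since $u_j \in SH_m^*(D)$ and $\mu \in J^c_{m,z}$, we have $u_j(z) \le \int_{\overline D} u_j\, d\mu$ for each $j$. As $z \in E$, property (i) gives $u_j(z) \to u(z)$ on the left. For the right-hand side, each $u_j \le 0$ on $\overline D$, so the constant $0$ is an integrable majorant and the reverse Fatou lemma applies, yielding
$$
\varlimsup_{j\to\infty}\int_{\overline D} u_j\, d\mu \ \le\ \int_{\overline D} \Bigl(\varlimsup_{j\to\infty} u_j\Bigr)\, d\mu \ \le\ \int_{\overline D} u\, d\mu,
$$
the last step by property (ii) and monotonicity of the integral. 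Combining the two sides,
$$
u(z) = \lim_{j\to\infty} u_j(z) \ \le\ \varlimsup_{j\to\infty}\int_{\overline D} u_j\, d\mu \ \le\ \int_{\overline D} u\, d\mu,
$$
so $\mu \in J_{m,z}$. Together with Remark 3 this gives $J_{m,z} = J^c_{m,z}$ for $z \in E$.

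The argument is genuinely short, and the only place needing any care — the ``main obstacle'', such as it is — is the interchange of limit and integral on the right: one must make sure a reverse Fatou argument is legitimate, which is exactly where the sign normalization $u_j \le 0$ built into the cone $SH_m^*(D)$ and the finiteness of $\mu$ are used. Everything else is bookkeeping, including the fact that $z \in E$ is needed precisely to upgrade (i) from an inequality to honest convergence $u_j(z)\to u(z)$ on the left-hand side.
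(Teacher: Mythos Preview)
Your proof is correct and follows essentially the same route as the paper's own argument: fix $z\in E$ and $\mu\in J^c_{m,z}$, apply the defining inequality to each $u_j$, and pass to the limit via Fatou. You are simply more explicit than the paper about why the limiting step is legitimate (finiteness of $\mu$ and the uniform upper bound $u_j\le 0$), but the underlying idea is identical.
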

\noindent
In the opposite direction, the next result  gives a sufficient condition so that
point-wise approximation of functions in $SH_m (D)$ by elements in $SH^*_m (D)$
is possible. We need the following standard notation: If $u: \overline D \to [-\infty,\infty]$ then the upper regularization $u^*$ of $u$ is defined as
$u^*(z):=\varlimsup\limits_{\xi \to z, \xi \in D} u(\xi), \ \forall z \in \ov{D}.$
If $u \in SH_m^{-} (D)$ then obvious ly $u \le u^*$ on $\ov{D}$ while $u=u^*$ on $D.$

\noindent
\begin{theorem}\label{thm1}Let $D \subset \C^n$ be a bounded domain.
Assume that there exists a  subset $X$ of $D$ with $\la_{2n} (X)=0$ such that 
$J_{m,z}=J^c_{m,z} $ for all $z \in D \setminus X.$ Let $E$ be any compact subset of $\partial D$ such that
$J^c_{m,\xi}=\{\de_{\xi}\}, \ \forall \xi \in E.$
Then there exists a $m-$polar subset $Y$ of $D$ having the following property:
For every $u \in SH_m^{-}(D)$, there exists a sequence $\{u_j\} \subset SH_m^*(D)$
having the following properties:

\no 
(i) $u_j$ converges pointwise to $u^*$ on $E \cup (D \setminus Y);$ 

\no 
(ii) $\varlimsup\limits_{j \to \infty} u_j \le u$ on $\ov D$.
\end{theorem}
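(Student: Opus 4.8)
The plan is to apply Edwards' duality theorem to the two cones $SH_m^-(D)$ and $SH_m^*(D)$, convert the hypotheses on Jensen measures into pointwise identities between the associated upper envelopes, and then pass to a genuine sequence in $SH_m^*(D)$ by Choquet's lemma, the $m$-polar exceptional set being extracted from the negligible sets that arise there. For a continuous $\phi:\overline D\to(-\infty,0]$ write $S[\phi]:=\sup\{v\in SH_m^-(D):v\le\phi\ \text{on}\ \overline D\}$ and $S^c[\phi]:=\sup\{v\in SH_m^*(D):v\le\phi\ \text{on}\ \overline D\}$. By Edwards' theorem, $S[\phi](z)=\inf\{\int_{\overline D}\phi\,d\mu:\mu\in J_{m,z}\}$ and $S^c[\phi](z)=\inf\{\int_{\overline D}\phi\,d\mu:\mu\in J^c_{m,z}\}$ for every $z\in\overline D$; moreover $S[\phi]$ is $m$-sh on $D$ (it is its own upper regularization since $\phi$ is continuous) and $S^c[\phi]$ is lower semicontinuous on $\overline D$, being a supremum of continuous functions. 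I record three consequences. First, $S^c[\phi]\le S[\phi]$, with equality on $D\setminus X$ since $J_{m,z}=J^c_{m,z}$ there; because $\lambda_{2n}(X)=0$, because $m$-sh functions are subharmonic hence determined by their values almost everywhere, and because $(S^c[\phi])^*$ and $S[\phi]$ are both $m$-sh on $D$, this already forces $(S^c[\phi])^*=S[\phi]$ on all of $D$. Second, by the first Remark $S[\phi]=\phi$ on $\partial D$, while the hypothesis $J^c_{m,\xi}=\{\delta_\xi\}$ gives $S^c[\phi]=\phi$ on $E$. Third, if $\phi_k\downarrow\psi$ then $S[\phi_k]$ and $S^c[\phi_k]$ decrease.

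Now fix $u\in SH_m^-(D)$. As $u$ is upper semicontinuous on $\overline D$ and $m$-sh on $D$, $u=u^*$ on $\overline D$. Pick continuous $\phi_k:\overline D\to(-\infty,0]$ with $\phi_k\downarrow u^*$. From $u\le u^*\le\phi_k$ and $u\in SH_m^-(D)$ we get $u\le S[\phi_k]\le\phi_k$, whence $S[\phi_k]\downarrow u^*$ on $\overline D$; by the first consequence $(S^c[\phi_k])^*=S[\phi_k]\downarrow u^*$ on $D$, and by the second $S^c[\phi_k]=\phi_k\downarrow u^*$ on $E$. For each $k$, Choquet's lemma yields an increasing sequence $\{v^{(k)}_l\}_{l\ge1}\subset SH_m^*(D)$ with $v^{(k)}_l\le\phi_k$ and $(\sup_l v^{(k)}_l)^*=(S^c[\phi_k])^*$ on $D$; inserting into the family near-extremal elements at the points of a fixed countable dense subset of $E$ — which exist because $S^c[\phi_k]=\phi_k$ there — one may in addition arrange $\sup_l v^{(k)}_l=\phi_k$ on $E$, a convergence that is uniform on the compact set $E$ by Dini's theorem. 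Then $Q_k:=\{\sup_l v^{(k)}_l<(\sup_l v^{(k)}_l)^*\}$ is $m$-polar, since negligible sets of increasing sequences of $m$-sh functions are $m$-polar, and off $Q_k$ one has $\sup_l v^{(k)}_l=(S^c[\phi_k])^*=S[\phi_k]$. Since such an increasing sequence converges in $m$-capacity on compact subsets of $D$, fixing an exhaustion $D=\bigcup_k K_k$ by compacts I choose $l(k)$ so that $\mathrm{cap}_m(\{v^{(k)}_{l(k)}<(S^c[\phi_k])^*-1/k\}\cap K_k)<2^{-k}$ and simultaneously $\sup_E|v^{(k)}_{l(k)}-\phi_k|<1/k$. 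The functions $u_j:=v^{(j)}_{l(j)}\in SH_m^*(D)$ then satisfy $u_j\le\phi_j\downarrow u^*=u$, so $\varlimsup_{j\to\infty}u_j\le u$ on $\overline D$, which is (ii); they converge to $u^*$ on $E$ since $|u_j-\phi_j|\le1/j$; and off the $m$-polar set $\bigcup_k Q_k\cup\bigcap_N\bigcup_{k\ge N}(\{v^{(k)}_{l(k)}<(S^c[\phi_k])^*-1/k\}\cap K_k)$ they converge to $u^*$, because there $(S^c[\phi_k])^*-1/k\le v^{(k)}_{l(k)}\le(S^c[\phi_k])^*=S[\phi_k]\downarrow u^*$.

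This already gives the conclusion with an exceptional $m$-polar set depending on $u$, and the genuine difficulty — indeed the whole point of the theorem — is to make it independent of $u$. For this I would fix at the outset a countable family $\{\phi^{(i)}\}_i$ of continuous functions $\overline D\to(-\infty,0]$, closed under pointwise minima and rational translations, out of which every upper semicontinuous $\psi:\overline D\to[-\infty,0]$ can be extracted as a pointwise decreasing limit; perform the Choquet construction above (including the enrichment near $E$) and the selection $i\mapsto l(i)$ once for each $\phi^{(i)}$; and set $Y:=\bigl(\bigcup_i Q^{(i)}\bigr)\cup\bigcap_N\bigcup_{i\ge N}(\{v^{(i)}_{l(i)}<(S^c[\phi^{(i)}])^*-1/i\}\cap K_i)$, which is $m$-polar by countable subadditivity of the $m$-capacity. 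Given any $u\in SH_m^-(D)$ one extracts $\phi^{(i_j)}\downarrow u^*$ with $i_j\to\infty$, sets $u_j:=v^{(i_j)}_{l(i_j)}$, and the estimates of the previous paragraph apply verbatim to yield (i) on $E\cup(D\setminus Y)$ and (ii) on $\overline D$. The main obstacle is precisely this uniformity of $Y$: the naive approach of choosing the obstacles and the Choquet/diagonal data only after $u$ is given produces a $u$-dependent exceptional set, and the remedy is to carry out the entire construction inside a fixed countable family of obstacles and to control the residual sets of the diagonal selection by $\mathrm{cap}_m$-estimates. Apart from this, the argument rests on now-standard facts about $m$-subharmonic functions — that the regularized supremum of a bounded-above family is $m$-sh, that $m$-sh functions are determined almost everywhere, that negligible sets of increasing sequences are $m$-polar, and that such sequences converge in $m$-capacity off an $m$-polar set — which I would cite rather than reprove.
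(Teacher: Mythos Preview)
Your overall strategy---Edwards' duality to identify the two envelopes off a null set, then Choquet's lemma and a diagonal extraction---is the paper's strategy too, and your argument is essentially correct. But you have made the extraction step considerably heavier than necessary, and in doing so you missed the observation that drives the paper's proof. The paper fixes a countable dense family $\{f_j\}\subset\mathcal C(\overline D)$, sets $Y:=\bigcup_j\{S_m^c f_j<(S_m^c f_j)^*\}$, and notes (via uniform approximation) that $S_m\varphi=S_m^c\varphi$ on $D\setminus Y$ for \emph{every} continuous $\varphi$. The crucial point is that this forces the lower semicontinuous function $S_m^c\varphi$ to coincide on $D\setminus Y$ with the upper semicontinuous function $S_m\varphi=(S_m\varphi)^*$, so $S_m^c\varphi$ is \emph{continuous} there. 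Hence, for any $u$ and any sequence $\varphi_j\downarrow u^*$, the Choquet sequence $v_{l,j}\uparrow S_m^c\varphi_j$ converges \emph{uniformly} on each compact $K\subset (D\setminus Y)\cup E$ by Dini's theorem, and a straightforward diagonal choice $u_j:=v_{l(j),j}$ finishes the proof---no $m$-capacity, no Borel--Cantelli, no pre-selected obstacle family needed. Your route via $\mathrm{cap}_m$-estimates and a fixed countable obstacle family closed under minima does work, but it imports machinery (convergence in $m$-capacity of increasing sequences, sets of zero $m$-capacity being $m$-polar) that the continuity observation renders unnecessary.

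Two smaller points. First, your claim that $u=u^*$ on $\overline D$ is false on $\partial D$: upper semicontinuity of $u$ gives only $u^*\le u$ there (indeed the paper's own inequality just before the statement of this theorem contains a typo in the opposite direction). This is harmless for you, since $u_j\le\varphi_j\downarrow u^*\le u$ still gives (ii). Second, because the elements of $SH_m^*(D)$ are continuous (hence lower semicontinuous) on $\overline D$, the \emph{second} form of Choquet's lemma applies and yields a countable subfamily with $\sup_l v^{(k)}_l=S^c[\phi_k]$ on all of $\overline D$, not merely equality of upper regularizations; your ``enrichment near $E$'' step is therefore superfluous.
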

\n
Here by a $m-$polar set we mean singular the locus of a $m-$sh. function. We postpone to the next section a brief discussion of $m-$polar sets. In the case where $m=m$ and 
$X=\emptyset$, we cover Theorem 3.1 in [DW].

The next theorem, which is our main result, deals with approximation of $m-$sh. functions which are only assumed to be bounded from above on a portion (possibly empty) of $\partial D.$
We will see, at the same time, that 
the exceptional set $Y$ mentioned in Theorem \ref{thm1}  {\it might} occur.
For this purpose, the following piece of notation is useful.
\begin{definition}
Let $a \in D$. Then by $\partial D(a)$ we mean the set of limits points of sequences in 
$\ov{D} \cap h_t (\partial D)$ as $t \uparrow 1,$ where $h_t(z): =t(z-a)+a.$
\end{definition}
\noindent
By an abuse of notation, we will sometimes write $h(t,z)$ instead of $h_t (z).$
Notice that $\partial D(a) \subset \partial D$ and $\partial D(a)=\emptyset$ if and only if 
$\ov{D} \cap h_t (\partial D)=\emptyset$ for $t$ closed enough to $1.$
\begin{theorem}\label{thm2}
Let $D \subset \C^n$ be a bounded domain and $a \in D$. Suppose that there exist an open \nhd\ $U$
in $\mathbb C^n$ of $\partial D(a)$ and a $m-$polar subset $E$ of $U \cap \partial D$  satisfying
the following conditions:

\noindent
(a) $J^c_{m,\xi}=\{\delta_{\xi}\}$ for every 
$\xi \in (\ov{U} \cap \partial D)\setminus E.$ 

\noindent
(b) $J^c_{1,\xi}=\{\delta_{\xi}\}$ for every $\xi \in \ov{U} \cap \partial D.$

Then there exists a $m-$polar subset $E'$ of $D$ such that
for every $u \in SH_m (D)$ satisfying $\sup\limits_{U \cap \partial D} u^*<\infty,$
there exists a sequence $\{u_j\} \subset SH_m^* (D)$ satisfying the following properties:

\noindent
(i) $u_j \to u$ pointwise on $D \setminus E'$;

\noindent
(ii)  $\varlimsup\limits_{j \to \infty} u_j \le u$ on $\ov D;$

\noindent
(iii) $u_j(x) \to u^*(x)$ for every $x \in \partial D$ such that $u$ is continuous at $x$, i.e.,
$$\lim\limits_{z \to x, z \in D} u(z)=u^*(x) \in [-\infty,\infty].$$
\noindent
In particular, if $\partial D(a)=\emptyset$ then $J_{m,z}= J^c_{m,z}$ for all $z \in D.$
\end{theorem}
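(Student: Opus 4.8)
The plan is to follow the scheme of Wikstr\"om and of [DW], [Di]: over the part of $\ov D$ where $u$ is not required to be controlled by (a)--(b) one uses the dilations $h_t$ to manufacture continuous $m$-sh approximants on neighbourhoods of $\ov D$; near $\partial D(a)$ --- where by hypothesis $u^*$ is bounded above, so that the bounded--from--above theory behind Theorem \ref{thm1} applies --- one uses Edwards' duality together with (a) and (b); and the two families are spliced by taking a maximum. After subtracting a constant we may assume $u^*\le -1$ on $\ov U\cap\partial D$; shrinking $U$ and using upper semicontinuity of $u^*$ on $\ov D$ we then fix open sets $\partial D(a)\subset V_0$, $\ov{V_0}\subset V_1$, $\ov{V_1}\subset U$ with $u^*\le-\tfrac12$ on a neighbourhood of $\ov{V_1}\cap\ov D$.

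For $t\in(0,1)$ put $D_t:=h_t^{-1}(D)$, so that $u\circ h_t\in SH_m(D_t)$. By the defining property of $\partial D(a)$ the sets $\ov D\setminus D_t$ are, for $t$ close to $1$, contained in any prescribed neighbourhood of $\partial D(a)$; hence there is $t_0<1$ so that for $t\in(t_0,1)$ one has $\ov D\setminus V_0\subset D_t$ and $h_t(\ov D\setminus V_0)$ is a compact subset of $D$, and therefore $u\circ h_t$ is $m$-sh and bounded above on a neighbourhood of $\ov D\setminus V_0$. Mollifying $u\circ h_{t_j}$ for suitable $t_j\uparrow1$ and radii $\delta_j\downarrow0$ produces continuous (even smooth) $m$-sh functions $g_j$ defined near $\ov D\setminus V_0$ with $g_j\to u$ almost everywhere and in $L^1_{loc}$ on $D\setminus\ov{V_0}$, with $\varlimsup_jg_j\le u$ there, with $g_j\le-\tfrac14$ on a fixed relatively compact collar $\Sigma$ in $(V_1\setminus\ov{V_0})\cap D$, and with $g_j(x)=u(h_{t_j}(x))\to u^*(x)$ at every $x\in\partial D\setminus V_0$ at which $u$ is continuous (because $h_{t_j}(x)\to x$ from within $D$). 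On a neighbourhood $W$ of $\ov{V_1}$ the function $u$ is bounded above and $\ov W\cap\partial D\subset\ov U\cap\partial D$, so (a) and (b) are in force there; re-running the proof of Theorem \ref{thm1} localized to $W$ --- Edwards' duality for the cones $SH_m^{-}$ and $SH_m^{*}$, with the case $E\ne\emptyset$ absorbed through (b) and the fact that $m$-sh functions are subharmonic --- yields continuous $m$-sh functions $\phi_j$ on neighbourhoods of $\ov D$ with $\phi_j\ge-\tfrac14$ on $\Sigma$, with $\varlimsup_j\phi_j\le u$ on $W\cap D$, with $\phi_j\to u$ off an $m$-polar subset of $W\cap D$, and with $\phi_j(x)\to u^*(x)$ at every continuity point $x\in V_0\cap\partial D$.

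Now set $u_j:=\phi_j$ off the domain of $g_j$ and $u_j:=\max(\phi_j,g_j)$ on the overlap. Since the interface of the two domains can be taken to lie inside $\Sigma$, where $g_j\le-\tfrac14\le\phi_j$, the maximum equals $\phi_j$ along the interface, so by the gluing lemma for $m$-sh functions each $u_j$ is a continuous $m$-sh function on a neighbourhood of $\ov D$, i.e.\ $u_j\in SH_m^{*}(D)$. Let $E'$ be the union of the $m$-polar set produced by the $\phi_j$ with the set $\{z\in D\setminus\ov{V_0}:g_j(z)\not\to u(z)\}$; the latter is $m$-polar because it is a negligible set of the family $\{g_j\}$ and so is controlled by the $m$-subharmonic analogue of the Bedford--Taylor theorem on negligible sets (applied, e.g., to $z\mapsto(\sup_{l\ge j}g_l)^*$). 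Then $E'$ is $m$-polar and properties (i)--(iii) follow on reading off the behaviour of $g_j$ on $D\setminus\ov{V_0}$, that of $\phi_j$ near $V_0$, and $\varlimsup_ju_j=\max(\varlimsup_j\phi_j,\varlimsup_jg_j)\le u$ on $\ov D$. Finally, if $u\in SH_m^{-}(D)$ then $u$ is bounded above on $D$, so the construction applies and gives $\{u_j\}\subset SH_m^{*}(D)$ with $u_j\to u^*=u$ on $D\setminus E'$ and $\varlimsup_ju_j\le u$ on $\ov D$, whence Proposition \ref{pro1} gives $J_{m,z}=J^c_{m,z}$ for all $z\in D\setminus E'$; and if in addition $\partial D(a)=\emptyset$, then for $t$ near $1$ the map $h_t$ carries a neighbourhood of $\ov D$ into $D$, so $u\circ h_t$ is already $m$-sh on a neighbourhood of $\ov D$, the construction degenerates (no splicing and no exceptional set), and Proposition \ref{pro1} applied with $E=D$ yields $J_{m,z}=J^c_{m,z}$ for every $z\in D$.

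The two places I expect to require real work are: (1) performing the local Edwards construction of the $\phi_j$ with all the quantitative control used above --- the lower bound on $\Sigma$, convergence off an $m$-polar set, and the boundary behaviour (iii) simultaneously --- so that $\max(\phi_j,g_j)$ is genuinely continuous and $m$-sh on a full neighbourhood of $\ov D$, in particular keeping the interface of the domains of $g_j$ and $\phi_j$ clear of the part of $\partial D$ where $u^*$ may be unbounded; and (2) upgrading the a priori merely Lebesgue-null exceptional set coming from the mollified dilations to an $m$-polar one, which rests on the $m$-subharmonic counterpart of the fact that negligible sets are $m$-polar.
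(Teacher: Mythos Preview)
Your overall plan --- dilate and mollify $u$ to cover $D$ away from $\partial D(a)$, produce an Edwards-type family near $\partial D(a)$ where $u^*$ is bounded above, and glue by a maximum --- is indeed the paper's strategy. However, the specification of your second family $\{\phi_j\}$ contains an internal contradiction that breaks the argument. You require simultaneously that $\phi_j\ge -\tfrac14$ on the collar $\Sigma\subset (V_1\setminus\ov{V_0})\cap D\subset W\cap D$ (so that the gluing along $\Sigma$ picks out $\phi_j$) and that $\varlimsup_j\phi_j\le u$ on $W\cap D$; but you have already arranged $u^*\le-\tfrac12$ on a neighbourhood of $\ov{V_1}\cap\ov D$, hence $u\le-\tfrac12$ on $\Sigma$. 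These demands cannot coexist, and the contradiction propagates to (ii): on $\Sigma$ one would have $u_j=\max(\phi_j,g_j)\ge\phi_j\ge-\tfrac14>-\tfrac12\ge u$. More broadly, since your $\phi_j$ are declared to lie in $SH_m^*(D)$ and $u_j=\max(\phi_j,g_j)$ on the whole overlap, property (ii) forces $\varlimsup_j\phi_j\le u$ throughout $D$; yet ``Theorem~\ref{thm1} localized to $W$'' gives no control of $\phi_j$ on $D\setminus W$.

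The paper resolves this by \emph{not} asking the second family to approximate $u$ anywhere in the interior. It uses functions of the form $j\,v_{l_j}+\varphi_{l_j,j}\in SH_m^*(D)$, where $v_{l_j}\uparrow V$ with $V=S_m^c(-\chi_K)$ for a fixed closed ball $K\subset D$, and $\varphi_{l_j,j}\uparrow\Phi_j$ with $\Phi_j$ the $SH_m^*$-envelope with boundary data $\varphi_j\downarrow u^*$ on $\ov U\cap\partial D$. Since $V<0$ on all of $D$ by the maximum principle, the factor $j$ drives $j\,v_{l_j}\to-\infty$ on $D$, so at every interior point the maximum with the dilated-mollified function eventually equals the latter; the role of the second family is solely to control values on $\ov U\cap\partial D$, where (a) and (b) give $V=0$ and $\Phi_j\le\varphi_j$ (with equality off $E$). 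The inequality required by the gluing lemma is then verified on the interface $\ov D\cap h_t(\partial D_\delta)$ by a compactness argument limiting towards $\partial D(a)$, after inserting a correction term $\tfrac1{j}(\psi*\rho_{\delta})$ with $\psi\in SH_m$ equal to $-\infty$ on $E$; this is precisely where (b) and the $m$-polarity of $E$ are used. The exceptional set $E'$ is then simply $\{\psi=-\infty\}\cap D$.

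This also disposes of your second flagged difficulty. With the paper's choice of $\delta_j$ and $t_j$ one has the uniform estimate $\|(u*\rho_{\delta_j})\circ h_{t_j}^{-1}-u*\rho_{\delta_j}\|_{K_j}<1/j$, and since $u*\rho_\delta\downarrow u$ pointwise, the dilated-mollified family converges to $u$ at \emph{every} point of $D$. No appeal to ``negligible $\Rightarrow$ $m$-polar'' is needed, and your proposed identification of $\{z:g_j(z)\not\to u(z)\}$ with a set of the form $\{w<w^*\}$ is not justified as written.
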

\noindent
The proof of the above theorem is inspired by Theorem 3.2 in [DW]. Nevertheless, as we will see, besides the (possible) unboundedness from above of $u,$
there is an additional technical difficulty coming from the fact that $1$ may not be a thin point of the segment $t \mapsto tz (0 \le t \le 1)$ for $m-$sh. functions.

\noindent
The structure of Jensen measures is particularly simple at boundary points which admits
a sort of peak $m-$sh. function. We isolate them in the following definition.
\begin{definition}
Let $\xi \in \partial D.$ Then we say that $\xi$ admits a local $m-$sh. barrier if 	there exist a small \nhd\ $U$ of $\xi$ and
$u \in SH_m (U)$ such that $u(\xi)=0$ whereas $u<0$ on $U \cap (\ov{D} \setminus \{\xi\})$.
\end{definition}
\vskip0,2cm
\noindent
{\bf Remarks.} 1. The condition $J^c_{m,\xi}=\{\delta_{\xi}\}$ is fulfilled at $\xi \in \partial D$ 
if there is a local $m-$sh. barrier at $\xi.$
Indeed, by shrinking $U$ we may achieve that 
$\sup\limits_{\ov{D} \cap \partial U} u<0.$ So we may find an open \nhd\ $V$ of $\ov{D} \setminus U$ such that
$\de:=-\sup\limits_{\ov V \cap \partial U}>0$. 
Thus by the gluing lemma (Lemma \ref{prop}(g))
the function
$\tilde u:=\max \{u,-\de\}$ on $U$ and $\tilde u:= -\de$ on $V \setminus U$ is $m-$sh. on $U \cup V$, an open \nhd\ of $\ov{D}$. Now let $\mu \in J^c_{m,\xi}.$ By convolving $u$ with smoothing kernels 
we obtain a sequence $u_j$ of $\mathcal C^\infty-$smooth $m-$sh. functions defined on \nhd s of $\ov{D}$ that decrease to $u$ on $\ov{D}.$ It follows that
$$u_j(\xi) \le \int\limits_{\ov{D}} u_j(z)d\mu.$$
By letting $j \to \infty$ and using Fatou's lemma we conclude that 
$\mu=\{\de_\xi\}.$ This reasoning is essentially contained in Proposition 1.4 in [Si].

\noindent
2. Suppose that there exists an open set $U \subset \mathbb C^n$ and a continuous
{\it strictly} $m-$sh. function $\va$ on $U$ such that $U \cap D=\{z \in U: \va(z)<0\}.$
Then for every $\xi \in U \cap \partial D$ we have $J^c_{m,\xi}=\{\delta_{\xi}\}$.
Indeed, for $M>0$ we set 
$$u_M (z):= M\va(z)-\vert z-\xi\vert^2, \ \forall z \in U.$$
Then for $M>0$ large enough, $u_M$ is a local continuous $m-$sh. barrier at $\xi.$ By the above remark we have $J^c_{m,\xi}=\{\delta_{\xi}\}$.

\noindent
3. Our proof shows that $E'$ is included in the $m-$polar hull of $E$, i.e., intersection of all $m-$polar sets that contain $E$. In particular, if $E=\emptyset$ then $E'=\emptyset$.
\vskip0,2cm
\noindent
In analogy with the concept of $B-$regular domains that was introduced and investigated throughly in [Si] (see also [B\l1]), we 
have the following definition.
\begin{definition}
A bounded domain $D$ in $\mathbb C^n$ is called $B_m-$regular if for every continuous function on $f$ on $\partial D$ we can find a $u \in SH_m (D) \cap \mathcal C(\ov{D})$ such that $u|_{\partial D}=f.$
\end{definition}
\n
If $D$ is $B_m-$regular then obviously for every boundary point $\xi \in \partial D$ there exists $u_\xi \in SH_m (D) \cap \mathcal C(\ov{D})$ such that
$u_\xi (\xi)=$ and $u_\xi (z)<0$ elsewhere.
We will provide a sort of converse to this statement in Theorem 1.10.

Using Theorem \ref{thm1} and Theorem \ref{thm2} we may derive the following consequences.
\begin{corollary}\label{coro}
Let $D$ be an intersection of a finite number of bounded $B_m-$regular domains with $\mathcal C^1-$smooth boundary. 
Then for every $u \in SH_m^{-} (D)$, there exists a sequence $\{u_j\} \subset SH_m^* (D)$ 
such that $u_j \downarrow u$ on $\ov{D}.$
\end{corollary}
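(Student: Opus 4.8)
The plan is to deduce the corollary from Theorem~\ref{thm2} applied with an empty exceptional set, after first checking that every boundary point of $D$ carries a local $m-$sh.\ barrier. Write $D=D_1\cap\cdots\cap D_N$ with each $D_k$ bounded, $B_m$-regular, and with $\mathcal C^1$-smooth boundary, say $D_k=\{\rho_k<0\}$ for a $\mathcal C^1$ defining function $\rho_k$ with $d\rho_k\neq0$ on $\pa D_k$. Fix $\xi\in\pa D$; then $\xi\in\pa D_{k_0}$ for some $k_0$. Applying the $B_m$-regularity of $D_{k_0}$ to the continuous boundary datum $-|z-\xi|^2$ yields $g\in SH_m(D_{k_0})\cap\mathcal C(\ov{D_{k_0}})$ with $g=-|z-\xi|^2$ on $\pa D_{k_0}$; by the maximum principle for subharmonic functions, $g\le0$ on $\ov{D_{k_0}}$, $g(\xi)=0$, and $g<0$ on $\ov{D_{k_0}}\setminus\{\xi\}$, hence on $\ov D\setminus\{\xi\}$ since $\ov D\subset\ov{D_{k_0}}$. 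The remaining point is to turn $g$ into a continuous $m-$sh.\ function defined on an actual \nhd\ of $\ov D$ in $\C^n$: this is where the $\mathcal C^1$-smoothness of $\pa D_{k_0}$ enters, namely to extend $g$ across $\pa D_{k_0}$ (using $\rho_{k_0}$) and then, after capping it with a small negative constant away from $\xi$ and gluing via Lemma~\ref{prop}(g) exactly as in the first of the Remarks following the definition of a local $m-$sh.\ barrier, to obtain $\tilde g\in SH_m^*(D)$ with $\tilde g(\xi)=0$ and $\tilde g<0$ on $\ov D\setminus\{\xi\}$; equivalently, $\xi$ admits a local $m-$sh.\ barrier. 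By the same Remark, $J^c_{m,\xi}=\{\de_\xi\}$. Since a continuous $m-$sh.\ function is continuous subharmonic, $SH_m^*(D)\subset SH_1^*(D)$, so $J^c_{1,\xi}\subset J^c_{m,\xi}=\{\de_\xi\}$; as $\de_\xi\in J^c_{1,\xi}$ always, $J^c_{1,\xi}=\{\de_\xi\}$ as well. Thus conditions (a) and (b) of Theorem~\ref{thm2} hold at every $\xi\in\pa D$.

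Now fix any $a\in D$ and any open \nhd\ $U$ of $\pa D$ in $\C^n$; it is a \nhd\ of $\pa D(a)\subset\pa D$, and by the previous step we may take $E=\emptyset$ in Theorem~\ref{thm2}. By the last of the Remarks following the definition of a local $m-$sh.\ barrier, the exceptional set $E'$ is then empty. Hence for every $u\in SH_m^-(D)$ — which trivially satisfies $\sup_{U\cap\pa D}u^*\le0<\infty$ — there is a sequence $\{u_j\}\subset SH_m^*(D)$ with $u_j\to u$ pointwise on $D$, $\limsup_j u_j\le u$ on $\ov D$, and $u_j(x)\to u^*(x)$ at every boundary point where $u$ is continuous. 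Feeding these sequences into Proposition~\ref{pro1} gives $J_{m,z}=J^c_{m,z}$ for all $z\in D$; combined with the previous step (and $J_{m,\xi}=\{\de_\xi\}$ for $\xi\in\pa D$) this yields $J_{m,z}=J^c_{m,z}$ throughout $\ov D$.

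It remains to upgrade this pointwise approximation to a decreasing one on $\ov D$. Given $u\in SH_m^-(D)$, choose continuous functions $\phi_k\downarrow u^*$ on $\ov D$ with $\phi_k\le0$, and set $w_k:=\sup\{v\in SH_m^*(D):v\le\phi_k\text{ on }\ov D\}$. By Edwards' theorem $w_k(z)=\inf\{\int_{\ov D}\phi_k\,d\mu:\mu\in J^c_{m,z}\}$, and using $J^c_{m,z}=J_{m,z}$ together with $u^*\in SH_m^-(D)$ one obtains $u^*\le w_k\le\phi_k$ with $(w_k)$ nonincreasing, so $w_k\downarrow u^*$ on $\ov D$, while $w_k=\phi_k$ on $\pa D$ because $J^c_{m,\xi}=\{\de_\xi\}$ there. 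A standard envelope-regularity argument — again using the barriers from the first step — shows that the upper regularizations $w_k^*$ are continuous on $\ov D$, hence belong to $SH_m^-(D)$ with every boundary point a continuity point; applying the second step to each $w_k^*$ and combining the resulting sequences by a standard diagonal/envelope argument (cf.\ [DW]) produces $\{u_j\}\subset SH_m^*(D)$ with $u_j\downarrow u$ on $\ov D$.

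The hard part is the first step, precisely the passage from the peak function $g$ — which $B_m$-regularity supplies only on the closed set $\ov{D_{k_0}}$ — to a continuous $m-$sh.\ function defined on an open \nhd\ of $\ov D$ in $\C^n$, so that it actually belongs to $SH_m^*(D)$ (equivalently, so that $\xi$ admits a local $m-$sh.\ barrier). This is exactly where the $\mathcal C^1$-smoothness of the $\pa D_k$ is indispensable — it provides the defining functions along which the extension is performed — and the construction is most delicate at the corner points of $\pa D$ where several $\pa D_k$ meet. A secondary difficulty is the continuity up to $\pa D$ of the envelopes $w_k^*$ in the last step, which again rests on the barriers built in the first step.
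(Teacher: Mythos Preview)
Your overall strategy --- verify $J^c_{m,\xi}=\{\de_\xi\}$ for every $\xi\in\partial D$ and then invoke Theorem~\ref{thm2} with $E=\emptyset$ --- is exactly the paper's. But your execution of the first step has a real gap. You propose to ``extend $g$ across $\partial D_{k_0}$ (using $\rho_{k_0}$)'' so as to obtain a continuous $m$-sh.\ function on an open \nhd\ of $\ov D$; it is not clear how to do this. The defining function $\rho_{k_0}$ is merely $\mathcal C^1$ and has no reason to be $m$-sh., so neither adding a multiple of it nor any other elementary device turns $g$ --- defined only on $\ov{D_{k_0}}$ --- into a function $m$-sh.\ on a neighbourhood of the boundary point $\xi$. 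This difficulty is already present at smooth points of $\partial D$, not only at the corners you single out.

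The paper (Lemma~\ref{localpeak}) does not attempt such an extension. It uses the $\mathcal C^1$-smoothness of $\partial D_{k_0}$ differently: choose small balls $\mathbb B_1\subset\mathbb B_2$ around $\xi$ so that $\ov{\mathbb B_1\cap D_{k_0}}\subset(\mathbb B_2\cap D_{k_0})+\ve\,\mathbf n$ for all small $\ve>0$, where $\mathbf n$ is the outward unit normal at $\xi$. Then the \emph{translate} $u_\ve(z):=g(z-\ve\,\mathbf n)$ is continuous and $m$-sh.\ on a genuine neighbourhood of $\ov{D'}$, $D':=D_{k_0}\cap\mathbb B_1$; for any $\mu\in J^c_{m,\xi}(D')$ one applies the Jensen inequality to $u_\ve*\rho_\de$, lets $\de\downarrow0$ and then $\ve\downarrow0$ with Fatou, and obtains $\int g\,d\mu\ge g(\xi)$, forcing $\mu=\de_\xi$. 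A second step passes from $D'$ to $D$ by gluing (Lemma~\ref{prop}(g)). At no point is a single element of $SH_m^*(D)$ peaking at $\xi$ produced; the equality $J^c_{m,\xi}=\{\de_\xi\}$ is obtained directly via the family $\{u_\ve\}_{\ve>0}$.

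Your third step (upgrading pointwise to decreasing convergence on $\ov D$) goes beyond what the paper's proof records --- the paper simply says ``apply Theorem~\ref{thm2}'' --- and is reasonable in outline, but the continuity of the envelopes $w_k^*$ on $\ov D$ that you invoke again rests on the boundary barrier apparatus you have not yet secured.
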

\noindent
{\bf Remark.} In the case where $m=n$ and $\partial D$ is $\mathcal C^1-$smooth,
the above result can be deduced by combining results in [Wik] and [FW]. 
Indeed, according to Theorem 4.1 in [Wik]  $u^*$ may be approximated from above on $\ov{D}$ by a decreasing sequence 
$\{v_j\} \subset SH_n (D) \cap \mathcal C(\ov{D})$. It is now suffices to use Theorem 1 in [FW] to approximate each 
$v_j$ uniformly on $\ov{D}$ by elements in $SH_n^*(D)$.
\vskip0,2cm
\noindent
The result below illustrates a class of domains in $\mathbb C^n$ to which Theorem \ref{thm2} is applicable.
\begin{corollary} \label{coro1}
Let $\Om$ be a bounded $B_m-$regular domain in $\mathbb C^n$ and $f$ be a $\mathcal C^1-$smooth function defined on $\Om$. Let
$$D:=\{z \in \Om: f(z)<0\}.$$
For $a \in D$, we set 
$$K_a:=\Big \{z \in \Om \cap \partial D: \Re \Big (z_1-a_1) \frac{\partial f}{\partial z_1}(z)+\cdots+(z_n-a_n) \frac{\partial f}{\partial z_n}(z) \Big) \ge 0 \Big\}.$$
Assume that there exists $a \in D$ such that the following conditions hold true:

\noindent
(i) $f$ is strictly $m-$sh. on an open \nhd\ $U$ of $K_a$. 

\noindent
(ii) There exists a $m-$polar subset $E$ of $U \cap \partial D$ such that for each point
$\xi \in (U \cap \partial D) \setminus E$ we have
$\xi=l_\xi \cap \partial D \cap \Om$, where $l_\xi:=\{t\xi: t \in \mathbb R\}.$

Then $D$ satisfies the condition given in Theorem \ref{thm2}.
\end{corollary}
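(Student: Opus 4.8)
The plan is to verify that the pair $(D,a)$ satisfies the hypothesis of Theorem \ref{thm2}: one has to produce an open neighborhood $W$ of $\partial D(a)$ and an $m$-polar subset of $W\cap\partial D$ on which conditions (a) and (b) of that theorem hold, and hypotheses (i), (ii) are designed exactly for this. First I would locate $\partial D(a)$. Let $\xi\in\partial D(a)$ and pick $t_k\uparrow1$ and $\zeta_k\in\partial D$ with $\zeta_k\to\xi$ and $h_{t_k}(\zeta_k)\in\overline D$. If $\xi\in\Omega$, then for large $k$ both $\zeta_k$ and $h_{t_k}(\zeta_k)=\zeta_k-(1-t_k)(\zeta_k-a)$ lie in $\Omega$, so $f(\zeta_k)=0$ and $f(h_{t_k}(\zeta_k))\le0$; since $f\in\mathcal C^1$, a first order Taylor expansion gives $f(h_{t_k}(\zeta_k))=-2(1-t_k)\Re\sum_j(\zeta_{k,j}-a_j)\frac{\partial f}{\partial z_j}(\zeta_k)+o(1-t_k)$, so dividing by $2(1-t_k)>0$ and letting $k\to\infty$ forces $\Re\sum_j(\xi_j-a_j)\frac{\partial f}{\partial z_j}(\xi)\ge0$, i.e. $\xi\in K_a$. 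Thus $\partial D(a)\cap\Omega\subseteq K_a\subseteq U$, and the remaining points of $\partial D(a)$ lie on $\partial\Omega$.

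Next, the local barriers. For $\xi\in\partial D\cap\Omega$ lying in $U$: on the open set $U\cap\Omega$ one has $(U\cap\Omega)\cap D=\{z\in U\cap\Omega:f(z)<0\}$ because $D=\Omega\cap\{f<0\}$, and $f$ is continuous and strictly $m$-sh there by (i), so the second remark following the definition of a local $m$-sh barrier gives $J^c_{m,\xi}=\{\delta_\xi\}$; a strictly $m$-sh function being strictly subharmonic, the same remark with $m=1$ gives $J^c_{1,\xi}=\{\delta_\xi\}$. For $\xi\in\partial D\cap\partial\Omega$: since $\xi\notin\Omega$ the identity $\{\xi\}=l_\xi\cap\partial D\cap\Omega$ of (ii) cannot hold, so each such $\xi$ in $U\cap\partial D$ belongs to $E$; one still needs $J^c_{1,\xi}=\{\delta_\xi\}$ there, because (b) of Theorem \ref{thm2} is demanded on the whole neighborhood. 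Here I would use that $\Omega$, being $B_m$-regular, is also $B_1$-regular: for $k\in\{1,m\}$ there is a peak function $g\in SH_k(\Omega)\cap\mathcal C(\overline\Omega)$ with $g(\xi)=0$ and $g<0$ on $\overline\Omega\setminus\{\xi\}$. For $s$ close to $1$ the dilate $g\circ h_s$ is $k$-sh on $h_s^{-1}(\Omega)$, and is continuous and negative on $\overline D$ — here condition (ii) is used to guarantee that $h_s$ pushes a neighborhood of $\overline D$ into $\Omega$, so that $h_s^{-1}(\Omega)$ is indeed such a neighborhood — hence $(g\circ h_s)|_{\overline D}\in SH_k^*(D)$. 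Feeding this into the defining inequality of $J^c_{k,\xi}$, letting $s\uparrow1$, and invoking dominated convergence ($g$ being bounded and continuous on $\overline\Omega$) yields $0=g(\xi)\le\int_{\overline D}g\,d\mu$ for every $\mu\in J^c_{k,\xi}$; since $g\le0$ on $\overline D$ with equality only at $\xi$, this forces $\mu=\delta_\xi$.

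To finish, take $W$ a bounded open neighborhood of $\partial D(a)$ with $\overline W\cap\partial D\cap\Omega\subseteq U$ and set $E':=E\cap\overline W$, an $m$-polar set containing $\overline W\cap\partial D\cap\partial\Omega$; the previous paragraph shows $J^c_{m,\xi}=J^c_{1,\xi}=\{\delta_\xi\}$ on $(\overline W\cap\partial D)\setminus E'$ (these points lie in $\Omega$) and $J^c_{1,\xi}=\{\delta_\xi\}$ on the rest of $\overline W\cap\partial D$ (these lie on $\partial\Omega$), which is exactly the hypothesis of Theorem \ref{thm2}. The Taylor estimate and the appeal to the remark on strictly $m$-sh barriers are routine. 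I expect the main obstacle to be the treatment of the boundary points of $D$ lying on $\partial\Omega$: one must check that hypothesis (ii) really does make the homotheties $h_s$ admissible at those points — this being the geometric substitute for the thin-point property of the segments $t\mapsto t\xi$, $0\le t\le1$, which can fail for $m$-sh functions, as noted after Theorem \ref{thm2} — and one must rule out that $\partial D(a)$ reaches a part of $\partial\Omega$ beyond the control of (i), (ii), for which one would combine the inclusion $\partial D(a)\subseteq\overline{K_a}\cup\partial\Omega$ with the global $B_m$-regularity of $\Omega$.
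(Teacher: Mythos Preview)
Your localization of $\partial D(a)\cap\Omega$ inside $K_a$ via a first-order Taylor expansion matches the paper's mean-value-theorem computation. The barrier step, however, diverges. For $\xi\in U\cap\partial D\cap\Omega$ you appeal directly to Remark~2 (the barrier $Mf(z)-|z-\xi|^2$); this is correct and, notably, does not use condition~(ii) at all. The paper instead rescales so that $f=|z|^2+g$ with $g\in SH_m(U)\cap\mathcal C^1(U)$ and constructs an explicit barrier $u_\xi$ involving $\Re(\cdots)-g(z)g(\xi)$, then shows $u_\xi\le 0$ on $\overline D$ by Cauchy--Schwarz; the equality case forces $\tilde\xi\in l_\xi\cap\partial D\cap\Omega$, and it is precisely condition~(ii) that eliminates this. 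So in the paper (ii) is the geometric input that makes the barrier strict, whereas in your route (ii) is left with essentially no role beyond the observation that $(U\cap\partial D)\cap\partial\Omega\subset E$.

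There is a genuine gap in your handling of $\xi\in\partial D\cap\partial\Omega$. You assert that condition~(ii) ``guarantees that $h_s$ pushes a neighborhood of $\overline D$ into $\Omega$'', but (ii) says nothing of the sort: it concerns the real lines $l_\xi=\{t\xi:t\in\mathbb R\}$ through the origin and their intersection with $\partial D\cap\Omega$, while $h_s$ is a dilation about $a$ and the desired inclusion $h_s(\overline D)\subset\Omega$ is a statement about $\Omega$, not about $\partial D$. Without that inclusion your dilate $g\circ h_s$ of the $B_m$-peak function need not belong to $SH_1^*(D)$, and the verification of $J^c_{1,\xi}=\{\delta_\xi\}$ required by condition~(b) of Theorem~\ref{thm2} collapses at those points. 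The paper handles such $\xi$ by invoking (the proof of) Lemma~\ref{localpeak}: the peak function for $\Omega$ at $\xi$ is translated along the inward normal to $\partial\Omega$ to produce elements of $SH_m^*(D)$, and then the gluing from Remark~1 finishes the job. This is where the paper quietly uses $\mathcal C^1$-smoothness of $\partial\Omega$ (a hypothesis present in the proof but not in the statement of the corollary).
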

\n
{\bf Remark.} For a concrete application of the above corollary, consider the case where $n=3, m=2,$
$\Om=\mathbb B_3$ is the unit ball in $\mathbb C^3$ and 
$$f(z_1,z_2,z_3)=\vert z_1\vert^2+\vert z_2\vert^2+\va (\vert z_3\vert^2),$$
where $\va$ is a $\mathcal C^2$ smooth function on $\mathbb R$ satisfying the following conditions:

\n
(a) $\va (0)<0, \va(1)>\va'(1);$

\n
(b) $x\va''(x)+\va'(x)>-\de$ for all $x \in [0,1)$ such that  $x\va' (x) \ge \va(x),$
where $\de \in (0,1/2)$ is a constant;

\n
(c) $\va$ is real analytic on $(0,1);$ 

\n
(d) For every $x \in (0,1)$ there exists $t \in (0,1)$ such that
$\va(tx)\ne t\va(x).$

Then the function $f$ satisfies the conditions (i) and (ii) of Corollary \ref{coro1}.
To see this, we first compute
\begin{equation} \label{ex}
dd^c f=dz_1 \wedge d\ov{z_1}+dz_2 \wedge d\ov{z_2}+
(\va' (\vert z_3\vert^2)+\vert z_3\vert^2 \va''(\vert z_3\vert^2)) dz_3 \wedge d\ov{z_3}.
\end{equation}
In view of $(a)$ we have $a=0 \in D.$ So an easy computation yields that
$$K_a=\Big \{(z_1,z_2,z_3) \in \mathbb B_3: 
\vert z_1\vert^2+\vert z_2\vert^2=-\va(\vert z_3\vert^2), 
\vert z_3\vert^2 \va''(\vert z_3\vert^2) \ge \va' (\vert z_3\vert^2)\}.$$
By (b) and (\ref{ex}) we see that $f$ is strictly $2-$sh. on a small \nhd\ $U$ of $K_a$. 
In view of the second condition in (a), we may obtain that $\vert z_3\vert<1$ on $U$.
Finally, given $\xi=(\xi_1,\xi_2,\xi_3) \in U \cap \partial D$, 
we claim that $\xi$ is an isolated point of 
$l_\xi \cap \partial D.$ If this is false, then there exists a sequence $t_j \to 1$ such that
$f(t_j \xi)=0$. Using the assumption (c) on real analyticity of $\va$ on $(0,1)$ we conclude that $f(t\xi)=0$ for all $t \in (0,1)$. This means that
$$t^2 (\vert \xi_1\vert^2+\vert \xi_2\vert^2)+\va (t^2 \vert \xi_3\vert^2)=0,  \forall t \in (0,1).$$
Plugging $\vert \xi_1\vert^2+\vert \xi_2\vert^2=-\va(\vert \xi_3\vert^2)$ into the above equation
we arrive at a contradiction to (d). The claim follows.

For an example of $\va$ having the above properties we take $\va(x)=\al (c-x)^3,$ where $\al, c$ are real constants such that $\al>0, -2<c<0, \al c^2<\fr2{45}.$
\vskip0,4cm
\n
We end up with the problem of finding a bounded continuous {\it maximal} $m-$sh function $u$ on $D$ 
such that the boundary values of $u$ coincides with a given continuous function defined on part of the boundary 
$\partial D.$
Recall that $u \in SH_m (D)$ is said to be {\it maximal} if for every relatively compact open subset $U$ of $D$ and every $v \in SH_m (D)$ such that $v \le u$ on $D \setminus U$ we have $v \le u$ on $D.$
This definition is analogous to the classical one, $i.e., m=n$ given by Sadullaev (see Proposition 3.1.1 in [Kl]).
\begin{theorem}\label{thm3}
Suppose that there is $v \in SH_m^{-} (D), v> -\infty$ on  $D$ and a compact  $K\subset \partial D$ satisfying the following properties:

\n
(i) $\underset{z \to \xi}\lim v(z)=-\infty,\ \forall   \xi \in K;$

\n
(ii)  Every $\xi \in (\partial D) \setminus K$ admits a local $m$-sh barrier. 

Then for every $\va \in \mathcal C(\partial D)$, there exists uniquely a bounded function
$u \in SH_m (D) \cap \mathcal C(D)$ having the following properties:

\n
(a) $\lim\limits_{z \to \xi, z \in D} u(z)=\va(\xi), \ \forall \xi \in (\partial D)\setminus K;$

\n
(b) $u$ is maximal on $D;$

\n
(c) $u$ can be approximated uniformly on compact sets of $\ov{D} \setminus K$ by elements in 
$SH_m^* (D)$.
\end{theorem}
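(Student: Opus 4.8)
The plan is to construct $u$ as the Perron--Bremermann type envelope associated with the boundary data $\va$ on $(\partial D)\setminus K$, using the function $v$ from hypothesis (i) to absorb the (possibly wild) behaviour near the compact set $K$. More precisely, set
$$
u(z):=\sup\{w(z): w\in SH_m(D),\ w\le 0 \text{ on } D,\ \varlimsup_{z\to\xi}w(z)\le\va(\xi)\ \forall \xi\in(\partial D)\setminus K\},
$$
after first normalising $\va$ so that $\va\le 0$ on $\partial D$ (we may do this by subtracting a constant and rescaling at the end, since $SH_m$ is a convex cone stable under affine transformations of values). One checks readily that $u$ is bounded (below by a fixed multiple of $v$ plus a constant near $K$, and by a classical barrier-type subsolution away from $K$, using hypothesis (ii) and Remark 1 after the barrier definition) and that $u^*\in SH_m(D)$ with $u^*\le 0$. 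The first substantial step is to show $u=u^*$ on $D$, i.e. that $u$ is already $m$-sh; this is the standard Choquet/Perron argument adapted to the class $SH_m$, which is legitimate because $SH_m$ is closed under the operations (max of finitely many, regularised supremum of a locally bounded family, gluing) recorded in Lemma \ref{prop}.

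Second, I would establish the boundary behaviour (a). At a point $\xi\in(\partial D)\setminus K$ the local $m$-sh barrier from (ii), combined with the continuity of $\va$, produces on one hand a subsolution $w$ competing in the envelope with $\varliminf_{z\to\xi}u(z)\ge\va(\xi)$, and on the other hand a supersolution (a small multiple of the barrier plus $\va(\xi)+\ep$, glued with $0$ as in Remark 1) forcing $\varlimsup_{z\to\xi}u(z)\le\va(\xi)$. The only delicate point is uniformity of the barrier near a boundary point of $K$: one must check that the barriers at points of $(\partial D)\setminus K$ can be chosen so that the resulting estimate degrades continuously as $\xi$ approaches $K$, which is why $\va$ need only be prescribed off $K$ and why no boundary value is claimed on $K$ itself.

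Third, maximality (b) is essentially built in: if $U\Subset D$ is open and $w\in SH_m(D)$ satisfies $w\le u$ on $D\setminus U$, then $\max\{w,u\}$ (which is $u$ outside $U$) is again an admissible competitor in the envelope defining $u$, whence $w\le\max\{w,u\}\le u$ on all of $D$; this uses the gluing lemma once more. Uniqueness of a bounded $u\in SH_m(D)\cap\mathcal C(D)$ satisfying (a) and (b) follows from a comparison principle for maximal $m$-sh functions: two such solutions $u_1,u_2$ agree on $(\partial D)\setminus K$, and near $K$ one traps their difference between $\pm\ep v + \text{const}$ using (i), letting $\ep\downarrow 0$ after invoking maximality on the domains $\{v<-R\}$.

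Finally, for the approximation statement (c), I would invoke Theorem \ref{thm2} (or, where the hypotheses suffice, Theorem \ref{thm1}): hypothesis (ii) says every boundary point off $K$ admits a local $m$-sh barrier, hence $J^c_{m,\xi}=\{\de_\xi\}$ there by Remark 1 after the barrier definition, while $v\to-\infty$ on $K$ makes $K$ behave like a removable ($m$-polar type) exceptional set so that the approximating sequence $\{u_j\}\subset SH_m^*(D)$ furnished by the relevant approximation theorem converges to $u^*=u$ uniformly on compact subsets of $\ov D\setminus K$; Dini's theorem upgrades the decreasing pointwise convergence to uniform convergence on such compacta since $u$ is continuous there. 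The main obstacle I anticipate is precisely the interaction between the two regimes—gluing the barrier-based supersolutions valid on $(\partial D)\setminus K$ with the $v$-based control valid near $K$ in a way that is simultaneously good enough to pin the boundary values in (a) and robust enough to pass to the envelope without destroying $m$-subharmonicity—together with verifying that the hypotheses of Theorem \ref{thm2} (an appropriate choice of the center $a$, the neighbourhood $U$ of $\partial D(a)$, and the $m$-polar set) can actually be met under (i)–(ii) rather than merely assumed.
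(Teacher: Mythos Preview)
Your proposal has a genuine gap: you never establish that $u$ is \emph{continuous} on $D$. Your envelope is a supremum of upper semicontinuous functions, and the ``standard Choquet/Perron argument'' you invoke yields at best $u=u^*$, i.e.\ upper semicontinuity; it does not supply lower semicontinuity. Walsh-type continuity results for Perron--Bremermann envelopes require the boundary limit to exist on \emph{all} of $\partial D$, whereas here (a) is only claimed off $K$, and the $\ve v$ device you reserve for uniqueness does not by itself produce interior lower semicontinuity. Without $u\in\mathcal C(D)$ you also cannot invoke Dini's theorem in your argument for (c).

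The paper circumvents this by taking as its candidate not your Perron envelope but
\[
S_m^c\va(z):=\sup\{w(z): w\in SH_m^*(D),\ w|_{\partial D}\le\va\},
\]
a supremum of \emph{continuous} functions on $\ov D$, hence lower semicontinuous from the outset. The crux is then to show $(S_m^c\va)^*=S_m^c\va$ on $D$. For this the paper uses Theorem~\ref{thm2} (via the barriers in (ii)) to obtain $J_{m,z}=J^c_{m,z}$ on $D$, whence Edwards' duality gives $S_m\va=S_m^c\va$ there; setting $u:=(S_m^c\va)^*$, Lemma~\ref{alpha} (off $K$) together with $v\to-\infty$ on $K$ forces $(u+\ve v)^*\le\va$ on all of $\partial D$, so $u+\ve v\le S_m\va=S_m^c\va$, and letting $\ve\downarrow 0$ yields $u=S_m^c\va$, hence continuity. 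Part (c) is then immediate from $u=S_m^c\va$ via Choquet's lemma and Dini. Your treatments of (a) by explicit barrier sub/supersolutions, of (b), and of uniqueness by comparing $u_1+\ve v-\ve$ with $u_2$ are essentially correct and match the paper; the key point you miss is that Theorem~\ref{thm2}---which you hold back for (c)---is exactly what drives interior continuity and must enter the argument at the start, through the choice of the $SH_m^*$-envelope rather than the $SH_m$-envelope.
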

\noindent
The above theorem appears to be new even in the case where $m=n$ because we allow the existence of points on $\partial D$ which may not admit  continuous plurisubharmonic barriers. Theorem 1.11 also differs somewhat from Theorem 2.1 in [Si] and Theorem 1.7 in [B\l1]
even in the case $K=\emptyset$ and $m=n$, since the solution $u$ may be approximated uniformly on $\ov{D}$ by continuous ones defined on \nhd s of $\ov{D}.$
In the recent preprint [ACH], the authors use Jensen measures to develop some extension and approximation results for $m-$subharmonic functions. They, in particular, generalize several results in [FW] and [Wik] to the context of $m-$subharmonic functions. There is apparently, no overlap between the current paper and their work.
\vskip1cm
\n
\maketitle
\section{Preliminaries}
\no  
Throughout this paper, unless otherwise specified, by $D$ we always mean a bounded domain in $\mathbb C^n.$
We also fix an approximate of identity $\{\rho_\de\}$ in $\mathbb C^n,$
i.e., $\rho_{\de}(x):=\fr1{\de^{2n}} \rho(x/\de),$ where $\rho$ is a smooth radial function with compact support in the unit ball of $\mathbb C^n$ and satisfies $\int\limits_{\mathbb C^n} \rho d\la_{2n}=1$ with $\la_{2n}$ is the Lebesgue measure of $\mathbb C^n.$

Our first lemma contains elementary facts about $m-$sh functions.
The proof of these statements follows from either standard arguments in pluripotential theory (see [Kl]) or from direct computation (see [B\l2]). The details are therefore omitted.
\begin{lemma} \label{prop}
(a) If $u \in SH_m(D)$, then the standard regularization $u_\de:= u*\rho_\de$ is also $m$-sh
in $D_\de:= \{z \in D : d(z,\partial D)>\de\}.$
Moreover, $u_\de \downarrow u$ as $\de \to 0;$

\noindent
(b) If $u,v\in SH_m (D)$ then $au + bv \in SH_m (D)$ for any $a, b \ge 0$, i.e. the class $SH_m(D)$ represents a convex cone;

\no
(c) $PSH(D)= SH_n (D) \subset \cdots \subset SH_1(D) = SH(D)$;

\no
(d) If  $\chi$ is a convex increasing function on $\mathbb R$ and $u \in SH_m (D)$, 
then $\chi \circ u \in SH_m (D)$;

\no
(e) The limit of a uniformly converging or decreasing sequence of $m$-sh functions is m-sh;

\no
(f) The maximum of a finite number of m-sh functions is $m$-sh. More generally, for an arbitrary locally uniformly bounded from above family $\{u_\al\}_{\al \in I} \subset SH_m (D)$ we have
$(\sup\limits_{\al \in I} u_\alpha)^* \in SH_m (D).$

\no
(g) (gluing lemma)
Let  $U$ be an open subset of $D$ such that $\partial U \cap D$ is relatively compact in $D$.
If $u \in SH_m(D), v \in SH_m(U)$ and $\varlimsup\limits_{x \to y} v(x) \le u(y)$ for each 
$y \in \partial U \cap D$, then the
function $w$ defined by
$$w:=\begin{cases}
u & \text{on}\ D\\
\max\{u,v\} &\text{on}\ D \setminus U.
\end{cases}$$
is $m$-sh in D.

\no  
(h) If $u \in SH_m (D)$ then the restriction of $u$ on each $n-m+1$ affine complex subspace is subharmonic.

\n
(i) If $u \in SH_m (D)$ and $g(t)=at+b, t \in \mathbb C^n$ is an affine map then 
$u \circ g \in SH_m (g^{-1} (D)).$ In other words, $m-$subharmonicity is invariant under translations and dilations.

\n
(j) For $1 \le m \le n$ the function $H_m (z)=-\vert z\vert^{2-\fr{2n}m}$ belongs to $SH_m (\mathbb C^n)$.
\end{lemma}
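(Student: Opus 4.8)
The plan is as follows. Every assertion is local, and once part (a) is available essentially everything reduces to the case of a smooth $m$-sh function; so I would prove (a) first, then verify the remaining statements in the smooth case using the pointwise linear-algebra characterisation of $m$-positivity due to B{\l}ocki, and finally pass back to general $m$-sh functions by the closure property (e).

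For (a): $u_\de=u*\rho_\de$ is smooth on $D_\de$ and $dd^cu_\de=(dd^cu)*\rho_\de$, so averaging the inequality $dd^cu\wedge\eta_1\wedge\cdots\wedge\eta_{m-1}\wedge\om^{n-m}\ge0$ against the nonnegative kernel $\rho_\de$ (translation-invariance of $m$-positivity, the translation half of (i)) gives $u_\de\in SH_m(D_\de)$, while $u_\de\downarrow u$ is the classical fact for subharmonic functions (every $m$-sh function is subharmonic: take all $\eta_i=\om$) together with the radial symmetry of $\rho$. For (e): if $u_j\to u$ uniformly or decreasingly with $u\not\equiv-\infty$, then $u$ is subharmonic and $u_j\to u$ in $L^1_{\mathrm{loc}}$, so $dd^cu_j\to dd^cu$ weakly and the closed inequalities defining $m$-positivity survive; the last clause of (f) then follows from Choquet's lemma (replace the family by a countable one, pass through the increasing sequence of finite maxima, and use that the regularised limit of an increasing sequence of subharmonic functions is subharmonic, with $dd^c$ converging weakly). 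After (a) and (e), any $u\in SH_m(D)$ is locally a decreasing limit of smooth $m$-sh functions, which is all that is needed to reduce the remaining parts to the smooth case.

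The central input is B{\l}ocki's dictionary: a constant-coefficient real $(1,1)$-form lies in $\hat\Gamma_m$ exactly when the eigenvalue vector $\la=(\la_1,\dots,\la_n)$ of its Hermitian matrix satisfies $S_1(\la)\ge0,\dots,S_m(\la)\ge0$ ($S_j$ the $j$-th elementary symmetric polynomial), the cone $\Gamma_m=\{S_1\ge0,\dots,S_m\ge0\}$ is convex and $S_1,\dots,S_m$ are G{\aa}rding-hyperbolic, so a $\mathcal C^2$ function $u$ is $m$-sh iff $\la(dd^cu(z))\in\ov{\Gamma_m}$ for every $z$; in particular $\hat\Gamma_n\subset\cdots\subset\hat\Gamma_1$, with $\hat\Gamma_n$ the full positivity cone and $\hat\Gamma_1=\{S_1\ge0\}$. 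From this: (b) is immediate already from the current definition, by bilinearity of $\wedge$ and $a,b\ge0$; (c) holds since $\ov{\Gamma_m}\subset\ov{\Gamma_{m-1}}$ gives the inclusions while the endpoints are $SH_1=SH$ (for $m=1$ the condition is vacuous beyond subharmonicity) and $SH_n=PSH$ ($\hat\Gamma_n$ being positivity); (d) from $dd^c(\chi\circ u)=\chi'(u)\,dd^cu+\chi''(u)\,du\wedge d^cu$ with $\chi',\chi''\ge0$, together with the fact that adding a positive semidefinite form to one whose eigenvalues lie in $\ov{\Gamma_m}$ keeps them there (Weyl's inequality plus monotonicity of $\Gamma_m$ in each coordinate); (f) for a finite maximum by approximating with the smooth soft-maxima $p^{-1}\log\sum e^{pu_i}\downarrow\max_iu_i$, which are $m$-sh because a convex nondecreasing function of $m$-sh functions is $m$-sh (convexity of the matrix cone, i.e.\ G{\aa}rding, together with the Weyl argument again); (h) by bringing the given $(n{-}m{+}1)$-dimensional affine complex subspace, through a unitary change of variables and a translation (which preserve $m$-subharmonicity since $\om$ is unitary-invariant), to $\{z_{n-m+2}=\cdots=z_n=0\}$ and choosing $\eta_i=\tfrac i2\,dz_{n-m+1+i}\wedge d\bar z_{n-m+1+i}\in\hat\Gamma_m$, for which $dd^cu\wedge\eta_1\wedge\cdots\wedge\eta_{m-1}\wedge\om^{n-m}$ becomes a positive multiple of $\bigl(\sum_{p\le n-m+1}u_{p\bar p}\bigr)\om^n$; and (i) because $dd^c(u\circ g)=|a|^2\,(dd^cu)\circ g$, so the inequalities defining $m$-positivity are merely multiplied by $|a|^2\ge0$. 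The gluing lemma (g) is then pure upper semicontinuity: the hypothesis on $\partial U\cap D$ forces $w$ to coincide with $u$ in a neighbourhood of $\partial U\cap D$, where it is therefore $m$-sh, while on $U$ it equals $\max\{u,v\}$, $m$-sh by (f).

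Finally (j): for $m<n$, write $H_m$ as the limit, decreasing as $\ve\downarrow0$, of the smooth functions $-(|z|^2+\ve)^{1-n/m}$. The complex Hessian of a radial function $g(|z|^2)$ has eigenvalue $g'+|z|^2g''$ (simple) and $g'$ (multiplicity $n-1$); a short computation with $g(r)=-(r+\ve)^{1-n/m}$ gives the eigenvalue vector equal to a positive multiple of $(\ve-ra,\,r+\ve,\dots,r+\ve)$, $a=\tfrac nm-1$, for which $S_j$ works out to be a positive multiple of $r\,\tfrac{n(m-j)}{jm}+\ve\,\tfrac nj$, hence $S_1,\dots,S_m>0$; so each $-(|z|^2+\ve)^{1-n/m}$ is strictly $m$-sh on $\C^n$ and $H_m\in SH_m(\C^n)$ by (e), while $H_n\equiv-1$ is trivially psh. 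I do not expect a genuine obstacle: the only points needing a little care are the weak-convergence argument propagating $m$-positivity through monotone limits of currents, and a precise statement of the $\hat\Gamma_m$/G{\aa}rding dictionary — which is exactly why the authors are content to cite [Kl] and [B\l2].
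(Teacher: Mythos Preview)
The paper does not prove this lemma: it states that the assertions ``follow from either standard arguments in pluripotential theory (see [Kl]) or from direct computation (see [B\l2])'' and omits all details. Your outline is therefore much more than the paper offers, and the architecture you propose---establish (a) first, work in the smooth case via the G{\aa}rding/B{\l}ocki eigenvalue description of $\hat\Gamma_m$, then descend through (e)---is exactly how one would flesh out those citations. Your explicit computations for (h) and (j) are clean and correct.

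There is one real gap, in (g). You claim the boundary hypothesis forces $w$ to coincide with $u$ on an open neighbourhood of $\partial U\cap D$, but this is false: take $D=\C^n$, $U$ the unit ball, $u(z)=|z|^2-1$, $v\equiv 0$; then $\varlimsup_{x\to y}v(x)=0=u(y)$ for every $y\in\partial U$, yet on the $U$ side $w=\max\{u,v\}=0>u$ arbitrarily close to $\partial U$. What the hypothesis actually buys you is only upper semicontinuity of $w$ together with $w\ge u$ on $D$ and $w=u$ on $D\setminus U$. To finish, invoke the fact (this is the G{\aa}rding-type inequality in [B\l2] that you are already using elsewhere) that for $\eta_1,\dots,\eta_{m-1}\in\hat\Gamma_m$ the constant-coefficient $(n-1,n-1)$-form $\beta=\eta_1\wedge\cdots\wedge\eta_{m-1}\wedge\om^{n-m}$ is \emph{positive}; hence, for each fixed choice of the $\eta_i$, the condition $dd^cw\wedge\beta\ge0$ is a linear constant-coefficient semi-elliptic inequality, and for such conditions the gluing lemma reduces to the classical sub-mean-value argument (at $y\in\partial U\cap D$ one has $w(y)=u(y)\le$ mean of $u\le$ mean of $w$ over the relevant ellipsoids or slices). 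The same positivity of $\beta$ incidentally yields shorter proofs of (d) and of the finite-max case of (f) than the Weyl/soft-max route you take, though your arguments there are also correct.
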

\noindent
Notice, however, that $m-$subharmonicity is not {\it invariant} under composition of holomorphic mappings. 
We now have the useful notion of $m-$polar sets.
\begin{definition}
A subset $E$ of $\mathbb C^n$ is said to be $m-$polar if for every $z_0 \in E$ we may find a \nhd\ $U$ of $z_0$ and
$u \in SH_m (U)$ such that $u=-\infty$ on $E \cap U.$
\end{definition}
The most basic properties of $m-$polar sets are collected below.
\begin{proposition}\label{negligible}
\noindent
(a) For every $m-$polar subset $E$, there exists $u \in SH_m (\mathbb C^n)$ such that $u \equiv -\infty$ on $E$. 

\noindent
(b) Let $\{u_\al\}_{\al \in I}$ be a set of $m-$sh functions defined on $D$ which are locally uniformly bounded from above.
Set $u:=\sup\limits_{\al \in I} u_\al$. Then the set $\{u<u^*\}$ is $m-$polar.

\noindent
(c) Let $\{X_j\}_{j \ge 1}$ be a sequence of $m-$polar sets. Then $\bigcup\limits_{j \ge 1} X_j$ is also $m-$polar.
\end{proposition}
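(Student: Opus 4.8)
The three parts are the $m$-subharmonic analogues of classical results from pluripotential theory, and I would prove them in the order (b), then (a), then (c): the summing construction in (c) needs the global functions produced by (a), while (a) itself (a Josefson-type statement) is where the real work lies, and (b) is logically independent and rests on the complex Hessian capacity. For (b) I would first reduce to a countable increasing family. By Choquet's lemma there is a countable subfamily $\{u_j\}\subset\{u_\alpha\}$ with $(\sup_j u_j)^*=u^*$; setting $v:=\sup_j u_j$ one has $v\le u$ and $v^*=u^*$, hence $\{u<u^*\}\subset\{v<v^*\}$, and replacing $u_j$ by $\max(u_1,\dots,u_j)$ (still $m$-sh by Lemma \ref{prop}(f)) we may assume $u_j\uparrow v$. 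Since $m$-polarity is a local property and is preserved by affine maps (Lemma \ref{prop}(i)), it suffices to show $\{v<v^*\}\cap B'$ is $m$-polar whenever $B'\subset B$ are concentric balls with $\ov{B'}\subset B$ and $\ov B\subset D$; subtracting a constant we may also take $u_j\le 0$ on $B$.

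Now the analytic core of (b). I would introduce the $m$-Hessian capacity $\mathrm{cap}_m(\,\cdot\,,B)$ associated with the operator $(dd^c\,\cdot\,)^m\wedge\om^{n-m}$ and carry over the Bedford--Taylor machinery to this setting: countable subadditivity of $\mathrm{cap}_m$, quasicontinuity of $m$-sh functions, continuity of the Hessian operator along increasing sequences of \emph{bounded} $m$-sh functions, and the key equivalence ``$E$ is $m$-polar $\iff\mathrm{cap}_m(E,B)=0$''. Truncating the $u_j$ to make them bounded and using the resulting weak convergence $(dd^c u_j)^m\wedge\om^{n-m}\to(dd^c v^*)^m\wedge\om^{n-m}$, the classical Bedford--Taylor estimate gives $\mathrm{cap}_m(\{v<v^*-\delta\}\cap B')=0$ for each $\delta>0$; letting $\delta\downarrow 0$ and the truncation level tend to $\infty$, and using subadditivity, $\{v<v^*\}\cap B'$ has zero $m$-capacity, hence is $m$-polar. (This $m$-capacity theory is available in the $m$-subharmonic literature, e.g. [B\l2]; failing that, one transcribes Bedford--Taylor's proofs replacing $(dd^c\,\cdot\,)^n$ throughout by $(dd^c\,\cdot\,)^m\wedge\om^{n-m}$.)

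For (a) --- a locally $m$-polar set is the $-\infty$-set of a single element of $SH_m(\C^n)$ --- I would first record the standard reduction: covering $E$ by countably many balls $B_k$ on which $E\cap B_k$ is $m$-polar, it is enough to treat each bounded piece $E_k:=E\cap B_k$ (produce $v_k\in SH_m(\C^n)$ with $v_k\equiv-\infty$ on $E_k$) and then form a suitably weighted series $\sum c_k v_k$ exactly as in the proof of (c) below. The bounded case is Josefson's theorem, which I would adapt by following the capacity-theoretic proof (see [Kl] for $m=n$): one estimates the global $m$-extremal function of $E_k$ from below using, as model singularity, the kernel $H_m(z)=-|z|^{2-2n/m}$ of Lemma \ref{prop}(j), and builds $v_k$ as a controlled series of $m$-extremal functions. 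Granting (a), part (c) is routine. Given $m$-polar sets $X_j$, pick $v_j\in SH_m(\C^n)$ with $v_j\equiv-\infty$ on $X_j$ by (a); since each $\{v_j=-\infty\}$ has Lebesgue measure zero ($v_j\in SH(\C^n)$ by Lemma \ref{prop}(c) and $v_j\not\equiv-\infty$), choose $p$ outside every $\{v_j=-\infty\}$, normalize so that $v_j\le 0$ on $B_j:=\{|z-p|<j\}$ and $v_j(p)\ge-1$, and put $v:=\sum_j 2^{-j}v_j$. On each ball $B_k$ the tail $\sum_{j\ge k}2^{-j}v_j$ is a decreasing limit of nonpositive $m$-sh functions that is finite at $p\in B_k$, hence $m$-sh by Lemma \ref{prop}(e), and the head is a finite $m$-sh sum; thus $v\in SH_m(\C^n)$ and $v\equiv-\infty$ on $\bigcup_j X_j$.

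The main obstacle is the genuinely analytic content buried in both (b) and the bounded case of (a): constructing the $m$-Hessian capacity, proving quasicontinuity of $m$-sh functions, and establishing that sets of zero $m$-capacity are $m$-polar (equivalently, Josefson's theorem in this setting). Everything else above --- the Choquet and localization reductions in (b), the covering-plus-series reduction in (a), and the whole of (c) --- is formal and uses only Lemma \ref{prop}.
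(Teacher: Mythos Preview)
Your outline is correct and is in fact far more detailed than what the paper offers: the paper does not prove Proposition~\ref{negligible} at all, but simply remarks that the Bedford--Taylor capacity method carries over when the relative capacity is replaced by the $m$-capacity, and cites [Lu] and [SA] for the details. Your sketch follows exactly this route (Choquet reduction plus $m$-Hessian capacity for (b), a Josefson-type argument via extremal functions and the kernel $H_m$ for (a), and the standard weighted series for (c)), so there is nothing to compare beyond noting that the precise references for the $m$-capacity machinery are [Lu] and [SA] rather than [B\l2].
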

\noindent
In the special case where $m=n$, the above results are proved by Bedford and Taylor using the key notion of relative capacity. The general case can be attacked by the same method where the above notion of capacity is replaced by that of $m-$capacity (see [Lu] or [SA]).
We should say that it is not so easy to construct $m-$polar sets which are not pluripolar ($1-$polar). The following result (Theorem 2.26 in [Lu]) enables us to construct a substantial class of such sets (see Example 2.27 in [Lu]).
\begin{proposition} \label{polar}
Let $H(r):=r^{2n-2m} (1\le m<n)$. Then every subset $E \subset \mathbb C^n$ that satisfies 
$$\infty>\Lambda_H (E):=\lim\limits_{\de \to 0} \Big (\inf \sum_k H(r_k)\Big),$$ 
where the infimum is taken over all coverings of $E$ by balls $B_k$ of radii $r_k \le \delta,$ is $m-$polar.
\end{proposition}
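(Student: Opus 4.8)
The plan is to run the argument through the relative $m$-Hessian capacity. Since $m$-polarity is a local notion and, by Proposition \ref{negligible}(c), a countable union of $m$-polar sets is again $m$-polar, it suffices to fix a large ball $\Om=B(0,R)$ containing $E$ relatively compactly and to prove that $E$ has vanishing outer $m$-capacity in $\Om$; here
$$\text{cap}_m(K,\Om):=\sup\Bigl\{\int_K (dd^c u)^m\wedge\om^{n-m}:\ u\in SH_m(\Om),\ -1\le u\le 0\Bigr\},$$
$\text{cap}_m^*$ denotes its usual outer regularization, and the equivalence ``$E$ is $m$-polar in $\Om$ $\iff$ $\text{cap}_m^*(E,\Om)=0$'' is the $m$-Hessian analogue of the Bedford--Taylor theory, available in [Lu] (see also [SA]). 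So the goal is $\text{cap}_m^*(E,\Om)=0$.

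The geometric input is the ball estimate $\text{cap}_m\bigl(\ov{B(a,\rho)},\Om\bigr)\le C(R)\,H(\rho)=C(R)\,\rho^{\,2n-2m}$, valid uniformly for $a\in E$ and $\rho$ small. I would obtain it by scaling: since the substitution $z\mapsto\rho z$ is natural for $dd^c$ and multiplies $\om$ by $\rho^2$, one gets $\text{cap}_m(B(a,\rho),\Om)=\rho^{2(n-m)}\,\text{cap}_m\bigl(B(0,1),\rho^{-1}(\Om-a)\bigr)$, and as $\rho\downarrow 0$ the domain $\rho^{-1}(\Om-a)$ exhausts $\C^n$, so the capacity on the right decreases to the finite positive constant $\kappa_{n,m}:=\text{cap}_m(B(0,1),\C^n)$. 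Finiteness of $\kappa_{n,m}$ is exactly where $m<n$ enters: by Lemma \ref{prop}(j) and (f) the function $u_0:=\max\{-1,-|z|^{\,2-2n/m}\}$ is a bounded element of $SH_m(\C^n)$ which equals $-1$ on $B(0,1)$ and tends to $0$ at infinity; moreover $u_0$ is the relative extremal function of $B(0,1)$, and $(dd^c u_0)^m\wedge\om^{n-m}$ is carried by the sphere $\{|z|=1\}$ (since $u_0\equiv -1$ on $B(0,1)$ and $-|z|^{\,2-2n/m}$ satisfies $(dd^c\,\cdot\,)^m\wedge\om^{n-m}=0$ on $\C^n\setminus\{0\}$), hence has finite total mass, which dominates $\kappa_{n,m}$. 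This is precisely why the exponent $2(n-m)=2n-2m$ of the gauge $H$ is the relevant one.

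Granting the ball estimate, fix $\de>0$ and a covering of $E$ by balls $B(a_k,r_k)$ with $r_k\le\de$ and $\sum_k H(r_k)\le\Lambda_H(E)+1$; countable subadditivity and outer regularity of $\text{cap}_m(\cdot,\Om)$ give
$$\text{cap}_m^*(E,\Om)\le\sum_k\text{cap}_m\bigl(\ov{B(a_k,r_k)},\Om\bigr)\le C(R)\sum_k H(r_k)\le C(R)\bigl(\Lambda_H(E)+1\bigr).$$
If $\Lambda_H(E)=0$ this is already $0$ and we are done. The main obstacle is the general case $\Lambda_H(E)<\infty$: crude subadditivity only yields $\text{cap}_m^*(E,\Om)<\infty$ (the infimum over coverings of $\sum_k H(r_k)$ is the Hausdorff content, which stays positive once $\Lambda_H(E)>0$), so handling one ball at a time cannot reach the conclusion. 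One must instead exploit that on a dense cluster of tiny balls packed around a set of finite $(2n-2m)$-measure the $m$-capacity is far smaller than the sum of the individual capacities of those balls. This is the $m$-analogue of the classical fact that a subset of $\R^N$ of finite $\mathcal H^{N-2}$-measure is Newtonian-polar, and its proof needs more than additivity: one uses, for instance, the decomposition of a set of finite integer-codimensional Hausdorff measure into a rectifiable part (carried by countably many $C^1$ submanifolds, which are $m$-polar) and an unrectifiable part, or a comparison of $\text{cap}_m$ with a Riesz capacity of order $2m$. Carrying out this final step for $\text{cap}_m$ is the substance of Theorem 2.26 in [Lu], and is where I would expect the real work to lie; the localization, the scaling computation of the ball capacity, and the subadditivity bound above are the routine part.
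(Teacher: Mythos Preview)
The paper does not prove this proposition; it quotes it as Theorem~2.26 of [Lu] and adds only the remark that the proof there ``uses among other things, a formula for $m$-relative extremal functions between concentric balls, which requires Lemma~\ref{prop}(j).'' So there is nothing in the paper to compare against beyond that one-line hint, and your sketch is already more detailed than the paper's own treatment: the reduction to $\text{cap}_m^*(E,\Om)=0$, the scaling computation of the ball capacity, and the role of $u_0=\max\{-1,-|z|^{2-2n/m}\}$ all line up with what that hint points to.

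That said, your final paragraph has a genuine problem. You correctly diagnose that subadditivity yields only $\text{cap}_m^*(E,\Om)<\infty$, and that the passage from finite $(2n-2m)$-measure to zero $m$-capacity is the real content of the result. But you then write that ``carrying out this final step \ldots\ is the substance of Theorem~2.26 in [Lu].'' That is circular: Theorem~2.26 in [Lu] \emph{is} the proposition you are trying to prove, not an auxiliary lemma you may invoke inside its proof, so as written the proposal never closes the gap it identifies. I would also be cautious about the two mechanisms you float for closing it. The rectifiable/unrectifiable decomposition presupposes that an arbitrary real $C^1$ submanifold of dimension $2n-2m$ is $m$-polar; for $1<m<n$ this is not obvious, since $m$-subharmonicity depends on the complex structure and is not invariant under real-linear changes of coordinates, so one cannot simply straighten such a submanifold to $\mathbb C^{n-m}\times\{0\}$. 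The Riesz-capacity route is closer in spirit, but it requires a precise comparison between $\text{cap}_m$ and a linear Riesz capacity of the correct order, and establishing that comparison is itself essentially the missing step rather than a shortcut around it.
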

\n
Using the same arguments, we can even show that the class of $m-$ polar sets is properly included in the set of $(m-1)-$polar ones.
The proof of Proposition \ref{polar} uses among other things, a formula for $m-$ relative extremal functions between concentric balls, which requires Lemma \ref{prop} (j). 

\n 
A major technical tool that will be used throughout our work is a version of Edwards' duality theorem which relates upper envelopes of 
upper semicontinuous functions defined on a compact metric space with
lower envelopes of integrals with respect to certain classes of measures. To begin with, let us fix the notation.
Let $X$ be a compact metric space, by $\mathcal C(X)$ we denote the set of real-valued continuous functions on $X.$
We also write $\mathcal B(X)$ for the class of positive, regular Borel measures on $X$.
Let $\mathcal F$ be a convex cone of upper semicontinuous functions on $X$ containing all the constants. If
$g: X \to [-\infty,\infty)$ is a Borel measurable function on $X$ and $z \in X$ then we define
$$Sg(z):= \sup \{u(z) : u \in \mathcal  F, u \le g\},$$
$$Ig(z):= \inf\{\int_X gd\mu:  \mu \in J_z^{\mathcal F} \}.$$
Here $J_z^{\mathcal F}:= \{\mu \in \mathcal B (X): u(z) \le \int_X ud\mu, \ \forall u \in \F\}.$
It is easy to see that $J_z^{\mathcal F}$  is a convex subset of $\mathcal B(X)$. 
Moreover, $\mu (X)=1$ for every $\mu \in J_z^{\mathcal  F}$ since $\mathcal F$ contains the constants.
In view of Banach-Alaoglu's theorem and the fact that every upper semicontinuous function on $X$ is the limit of a decreasing sequence of continuous function on $X$,
we can also check that $J_z^{\mathcal F}$
is weak-$^*$ compact in $\mathcal B(X).$
Now we have the following basic duality theorem of Edwards (see [Ed], [Wik]).
\begin{theorem} \label{edwards}
Let $X, \F$ be as above. If $g: X \to (-\infty, \infty]$ is lower semicontinuous, then $Sg = Ig.$
\end{theorem}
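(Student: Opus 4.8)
The plan is to establish the two inequalities $Sg\le Ig$ and $Ig\le Sg$ separately. The first is immediate: if $u\in\mathcal F$, $u\le g$, and $\mu\in J_z^{\mathcal F}$, then $u(z)\le\int_X u\,d\mu\le\int_X g\,d\mu$ (the integral being well defined in $(-\infty,\infty]$ because the lower semicontinuous $g$ is bounded below on the compact space $X$), and taking the supremum over such $u$ and then the infimum over $\mu$ gives $Sg(z)\le Ig(z)$ for every $z$. For the reverse inequality I would argue pointwise and first treat the case $g\in\mathcal C(X)$, in which $Sg(z)$ is finite since $\min_X g\in\mathcal F$ lies below $g$ while any $u\le g$ has $u(z)\le g(z)$.

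Fix $z\in X$ and $g\in\mathcal C(X)$, and define $q\colon\mathcal C(X)\to\R$ by $q(h):=-S(-h)(z)$. Because $\mathcal F$ is a convex cone containing the constants one checks directly that $q$ is sublinear and that $h(z)\le q(h)\le\max_X h$ for all $h$. The decisive point is the inequality
$$Sg(z)+S(-g)(z)\le 0,$$
which holds because for $v\le g$ and $u\le -g$ in $\mathcal F$ the sum $u+v\in\mathcal F$ is $\le 0$ everywhere, hence $u(z)+v(z)\le 0$. This says precisely that the linear map $tg\mapsto t\,Sg(z)$ on the line $\R g$ is dominated there by $q$, so the Hahn--Banach theorem extends it to a linear functional $L\le q$ on $\mathcal C(X)$ with $L(g)=Sg(z)$. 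From $q(h)\le\max_X h$ we get $|L(h)|\le\|h\|_\infty$, so by Riesz representation $L=\int_X(\cdot)\,d\mu$ for a regular Borel measure $\mu$; testing $L$ against nonnegative functions and against $\pm 1$ (and using $S(h)(z)\ge 0$ for $h\ge 0$, $S(1)(z)=1$, $S(-1)(z)=-1$) shows $\mu\ge 0$ and $\mu(X)=1$. To check $\mu\in J_z^{\mathcal F}$, fix $u\in\mathcal F$: for every $h\in\mathcal C(X)$ with $h\le -u$ one has $u\le -h$, hence $\int_X h\,d\mu=L(h)\le q(h)=-S(-h)(z)\le -u(z)$, and since $-u$ is lower semicontinuous, taking the supremum over such $h$ and invoking inner regularity of $\mu$ yields $\int_X(-u)\,d\mu\le -u(z)$, i.e. $\int_X u\,d\mu\ge u(z)$. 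Thus $Ig(z)\le\int_X g\,d\mu=Sg(z)$, which together with the first inequality proves the theorem for continuous $g$.

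For general lower semicontinuous $g\colon X\to(-\infty,\infty]$ I would write $g=\sup_n g_n$ with $g_n\in\mathcal C(X)$ increasing (a standard inf-convolution construction, truncating from above to accommodate the value $+\infty$). The continuous case supplies $\mu_n\in J_z^{\mathcal F}$ with $\int_X g_n\,d\mu_n=Sg_n(z)\le Sg(z)$, and by the weak-$^*$ compactness of $J_z^{\mathcal F}$ noted before the statement we may pass to a limit $\mu\in J_z^{\mathcal F}$ of a subsequence. For fixed $m$ and all large $n$ along that subsequence $g_m\le g_n$, so $\int_X g_m\,d\mu_n\le Sg(z)$; letting $n\to\infty$, which is legitimate since $g_m$ is continuous, gives $\int_X g_m\,d\mu\le Sg(z)$, and monotone convergence then forces $\int_X g\,d\mu\le Sg(z)$. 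Hence $Ig(z)\le Sg(z)$, completing the proof.

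I expect the Hahn--Banach step to be the crux of the matter: selecting the sublinear majorant $q$, verifying $Sg(z)+S(-g)(z)\le 0$ so that prescribing $L(g)=Sg(z)$ is admissible, and --- the one genuinely delicate point --- checking that the measure obtained belongs to $J_z^{\mathcal F}$ for \emph{all} of $\mathcal F$ rather than merely for its continuous members, which is exactly where inner regularity of $\mu$ against the lower semicontinuous functions $-u$ is used. The reduction from continuous to lower semicontinuous $g$ is then a routine weak-$^*$ compactness argument.
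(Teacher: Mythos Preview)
Your proof is correct. Note, however, that the paper does not supply its own proof of this statement: Theorem~\ref{edwards} is quoted as a known result with references to [Ed] and [Wik], so there is no in-paper argument to compare against.

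That said, what you have written is essentially the classical proof. The Hahn--Banach/Riesz construction for continuous $g$ --- introducing the sublinear functional $q(h)=-S(-h)(z)$, using $Sg(z)+S(-g)(z)\le 0$ to make the prescription $L(g)=Sg(z)$ admissible on $\mathbb{R}g$, and extracting a probability measure via Riesz --- is Edwards' original argument. The point you single out as delicate, namely verifying $\mu\in J_z^{\mathcal F}$ for \emph{all} upper semicontinuous $u\in\mathcal F$ rather than only continuous ones, is handled correctly: approximating the lower semicontinuous function $-u$ from below by continuous functions and applying monotone convergence is exactly what is needed. The reduction from continuous to general lower semicontinuous $g$ by monotone approximation together with the weak-$^*$ compactness of $J_z^{\mathcal F}$ (noted in the paper just before the statement) is the standard way to finish.
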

Apparently the first use of Edwards' duality theorem in pluripotential theory has been made in the seminal work  [Si]  where we can find a systematic study of domains in $\C^n$ on which the Dirichlet problem for plurisubharmonic functions is solvable.

In our context, by applying the above theorem to the convex cones $SH_m^{-} (D)$
and $SH_m^{*}(D)$ we obtain the following result which will be referred to as Edwards' duality theorem.
\begin{theorem}\label{thmEd} (Edwards' duality theorem)
Let $\varphi: \ov D \to (-\infty, +\infty]$ be a lower semicontinuous function. Then  we have
$$\begin{aligned}
&\inf \Big \{\int\limits_{\ov D} \varphi d\mu,  \mu \in  J_{m,z}\Big \}= \sup\{u(z): u \in SH_m^{-}(D), 
u \le  \varphi \ \ \text{on}\ \ \ov D \}, \ \forall z \in D\\
&\inf \Big \{\int\limits_{\ov D} \varphi d\mu,  \mu \in  J^c_{m,z} \Big \}=  \sup\{u(z): u \in SH_m^{*}(D), 
u \le  \varphi \ \ \text{ on}\ \ \ov D \},  \ \forall z \in \ov{D}.
\end{aligned}$$
\end{theorem}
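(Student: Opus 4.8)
\section*{Proof proposal for Theorem \ref{thmEd}}

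The plan is to read off both identities as special cases of the abstract duality theorem, Theorem \ref{edwards}, applied twice: once with the cone $SH_m^-(D)$ and once with $SH_m^*(D)$. Take $X=\ov D$, which is a compact metric space because $D$ is a bounded domain in $\C^n$. By definition the members of $SH_m^-(D)$ are upper semicontinuous on $\ov D$, while those of $SH_m^*(D)$ are continuous on $\ov D$; by Lemma \ref{prop}(b) each family is a convex cone, and each contains the zero function. The one hypothesis of Theorem \ref{edwards} that is not immediate is that the cone contain \emph{all} constants, whereas $SH_m^-(D)$ and $SH_m^*(D)$ contain only the non-positive ones. I would handle this by applying Theorem \ref{edwards} to the enlarged cone $\widehat\F:=\{u+c:\ u\in\F,\ c\in\R\}$, where $\F$ denotes $SH_m^-(D)$ (respectively $SH_m^*(D)$); since $dd^c$ annihilates constants, adding an arbitrary real constant to an $m$-sh function leaves it $m$-sh, so $\widehat\F$ is again a convex cone of upper semicontinuous functions on $\ov D$, and it now contains every constant.

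The remaining work is two bookkeeping identifications. First, the Jensen measures: for $z\in\ov D$ a measure $\mu\in\mathcal B(\ov D)$ lies in $J_z^{\widehat\F}$ iff $c\le c\,\mu(\ov D)$ for every $c\in\R$ (testing against the constants) and $u(z)+c\le\int_{\ov D}u\,d\mu+c\,\mu(\ov D)$ for every $u\in\F$ and $c\in\R$; the first condition forces $\mu(\ov D)=1$, and then the second collapses to $u(z)\le\int_{\ov D}u\,d\mu$ for all $u\in\F$. Hence $J_z^{\widehat\F}$ is exactly $J_{m,z}$ (respectively $J^c_{m,z}$) of Definition \ref{def2}, so $I_{\widehat\F}\varphi(z)=\inf\{\int_{\ov D}\varphi\,d\mu:\ \mu\in J_{m,z}\}$ (respectively over $J^c_{m,z}$). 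Second, the upper envelope: since every $u\in\F$ already satisfies $u\le0$, the constraint $u\le\varphi$ is the same as $u\le\min(\varphi,0)$, and $\min(\varphi,0)$ is again lower semicontinuous, so it suffices to treat $\varphi\le0$; but for $\varphi\le0$ the inequality $u+c\le\varphi$ forces $u+c\le0$, hence $u+c\in\F$, so $S_{\widehat\F}\varphi(z)=\sup\{v(z):\ v\in\F,\ v\le\varphi\}$, which is precisely the right-hand side claimed. Feeding these two identifications into the conclusion $S_{\widehat\F}\varphi=I_{\widehat\F}\varphi$ of Theorem \ref{edwards} yields both equalities; for $SH_m^*(D)$ the same argument applies verbatim at every $z\in\ov D$, and for $SH_m^-(D)$ we record it for $z\in D$ since that is all that is used in the sequel.

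This proof is ``soft'': there is no analytic obstacle, and the whole statement is just a translation of Edwards' theorem into the notation of Definition \ref{def2}. The only point requiring genuine care — and the place I would be most careful — is the interplay between the constant functions and the normalization $\mu(\ov D)=1$ of Jensen measures: one must check that enlarging $\F$ to $\widehat\F$ alters neither the class of Jensen measures nor the relevant upper envelope, and, relatedly, that the reduction to $\varphi\le0$ (the form in which the theorem is applied) is harmless. Once these routine verifications are in place, the proof is complete.
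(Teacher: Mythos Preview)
Your overall strategy --- read the two identities off as instances of the abstract Edwards theorem --- is exactly what the paper intends (it gives no proof beyond the sentence ``by applying the above theorem to the convex cones $SH_m^{-}(D)$ and $SH_m^{*}(D)$''), and you are right to flag the hypothesis on constants as the only point requiring care. However, one of your two bookkeeping identifications is wrong as written.

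The claim ``$J_z^{\widehat\F}$ is exactly $J_{m,z}$'' is false. Testing Definition~\ref{def2} against the constants $u\equiv c\le 0$ in $\F$ gives only $c\le c\,\mu(\ov D)$, i.e.\ $\mu(\ov D)\le 1$; nothing forces $\mu(\ov D)=1$. In particular the zero measure lies in $J_{m,z}$ (since $u(z)\le 0=\int u\,d0$ for every $u\in SH_m^{-}(D)$) but not in $J_z^{\widehat\F}$. So you only get the strict inclusion $J_z^{\widehat\F}\subset J_{m,z}$, and correspondingly only the inequality $I_{\widehat\F}\varphi\ge \inf\{\int\varphi\,d\mu:\mu\in J_{m,z}\}$. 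A second, related gap: your reduction to $\varphi\le 0$ is justified only on the $S$-side; you never check it on the $I$-side.

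Both gaps are easy to close. For $\psi\le 0$ the chain
\[
S_\F\psi \;\le\; \inf_{\mu\in J_{m,z}}\!\int\psi\,d\mu \;\le\; \inf_{\mu\in J_z^{\widehat\F}}\!\int\psi\,d\mu \;=\; S_{\widehat\F}\psi \;=\; S_\F\psi
\]
(the first inequality is the trivial one $u(z)\le\int u\,d\mu\le\int\psi\,d\mu$, the second is the inclusion $J_z^{\widehat\F}\subset J_{m,z}$, the equality is Edwards for $\widehat\F$, and the last is your correct observation) collapses to equalities. For general $\varphi$, set $\psi=\min(\varphi,0)$; you already have $S_\F\varphi=S_\F\psi$, and on the $I$-side note that for any $\mu\in J_{m,z}$ the restriction $\mu'=\mu\big|_{\{\varphi\le 0\}}$ again lies in $J_{m,z}$ (because $\int u\,d\mu'=\int u\,d\mu-\int_{\{\varphi>0\}}u\,d\mu\ge\int u\,d\mu\ge u(z)$ for $u\le 0$) and satisfies $\int\varphi\,d\mu'=\int\psi\,d\mu$, whence $\inf_{J_{m,z}}\int\varphi\,d\mu=\inf_{J_{m,z}}\int\psi\,d\mu$. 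With these two adjustments your argument goes through.
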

Our next ingredients consists of a few standard facts about upper semicontinuous and lower semicontinuous functions on compact sets of $\C^n$.
First, we have an elementary yet useful result of Choquet (see Lemma 2.3.4 in [Kl]).
\n
\begin{lemma}\label{lmC}
Let $\{u_\al\}_{\al \in \mathcal {A}}$ be a family of upper semicontinuous functions defined on a closed subset  
$X \subset \mathbb C^n,$ which is locally bounded from above. Then there exists a countable subfamily $\mathcal {B}$ of $\mathcal A$
such that
$$(\sup \{u_\al: \al \in \mathcal {B}\})^*=(\sup\{u_\al: \al \in \mathcal {A}\})^*.$$
If $u_\al$ are lower semicontinuous then $\mathcal {B}$ can be chosen so that
$$\sup \{u_\al: \al \in \mathcal {B}\}=\sup\{u_\al: \al \in \mathcal{A}\}.$$
\end{lemma}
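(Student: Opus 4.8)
The plan is to handle the two assertions separately, using only that $X$, being a subspace of $\C^n$, is a separable metrizable space: it carries a countable base $\{B_i\}_{i\ge 1}$ for its topology, and it (like each of its subspaces) is Lindel\"of. Throughout set $u:=\sup\{u_\al:\al\in\mathcal A\}$.

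For the first assertion I would begin by noting that the closure of $B_i$ inside $X$ is a closed bounded subset of $\C^n$, hence compact, so local boundedness from above yields $m_i:=\sup_{B_i}u<\infty$ for every $i$. Then, for each $i$, I would choose by the definition of supremum a countable subfamily $\mathcal A_i\subset\mathcal A$ together with points of $B_i$ witnessing $\sup_{B_i}\bigl(\sup\{u_\al:\al\in\mathcal A_i\}\bigr)=m_i$, and put $\mathcal B:=\bigcup_{i\ge1}\mathcal A_i$ (countable) and $v:=\sup\{u_\al:\al\in\mathcal B\}$. Since $v\le u$ and $\mathcal A_i\subset\mathcal B$ one gets
$$m_i=\sup_{B_i}\bigl(\sup\{u_\al:\al\in\mathcal A_i\}\bigr)\le\sup_{B_i}v\le\sup_{B_i}u=m_i ,$$
so $\sup_{B_i}v=\sup_{B_i}u$ for all $i$. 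Because $\{B_i\}$ is a base, every open $G\subset X$ is the union of the $B_i$ it contains, whence $\sup_G v=\sup_G u$ for all open $G$; since the upper regularization of a function (however one normalizes it) is determined by its suprema over open sets, this forces $v^*=u^*$, which is the assertion.

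For the second assertion I would use the Lindel\"of property instead. Since each $u_\al$ is lower semicontinuous, for $q\in\mathbb Q$ the set $G_{\al,q}:=\{z\in X:u_\al(z)>q\}$ is open and $\{u>q\}=\bigcup_{\al\in\mathcal A}G_{\al,q}$. The subspace $\{u>q\}$ is Lindel\"of, so I can extract a countable $\mathcal A_q\subset\mathcal A$ with $\{u>q\}=\bigcup_{\al\in\mathcal A_q}G_{\al,q}$; then $\mathcal B:=\bigcup_{q\in\mathbb Q}\mathcal A_q$ is countable, and with $v:=\sup\{u_\al:\al\in\mathcal B\}$ one has $v\le u$, while for each $z\in X$ and each rational $q<u(z)$ some $\al\in\mathcal A_q\subset\mathcal B$ satisfies $u_\al(z)>q$, so $v(z)>q$; letting $q\uparrow u(z)$ gives $v\ge u$, hence $v=u$ on $X$.

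The only structural inputs are the second countability (equivalently, the Lindel\"of property) of subspaces of $\C^n$ and the local compactness of $\C^n$ used to make $m_i$ finite; the remaining points — that the upper regularization is recovered from suprema over a base of open sets, and the bookkeeping with the values $\pm\infty$ in the final limiting step — are routine. The mildest subtlety I anticipate is in the first part: the families $\mathcal A_i$ must be chosen so as to recover the full supremum $\sup_{B_i}u$ over the whole basic set $B_i$, not merely the values at countably many points of it, since that is precisely what makes the base argument run uniformly in $i$.
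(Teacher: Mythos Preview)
Your argument is correct. Note, however, that the paper does not actually supply a proof of this lemma: it is stated with a reference to Klimek's book (Lemma~2.3.4 there) and then used as a black box. What you have written is essentially the standard proof of Choquet's topological lemma, relying only on second countability (for the first assertion) and the Lindel\"of property (for the second).

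Two minor remarks. First, in the upper-semicontinuous part you invoke local compactness of $\C^n$ together with local boundedness to ensure $m_i=\sup_{B_i}u<\infty$, but this is not needed: even if $m_i=+\infty$ you can pick a sequence $(z_k,\al_k)\in B_i\times\mathcal A$ with $u_{\al_k}(z_k)\to m_i$ and set $\mathcal A_i=\{\al_k\}$, and the chain $\sup_{B_i}v\ge u_{\al_k}(z_k)\to m_i$ still gives $\sup_{B_i}v=m_i$. (If you do want finiteness, you should say explicitly that the base is taken to consist of \emph{bounded} open sets, since an arbitrary countable base on an unbounded closed $X$ need not have this property.) Second, your final paragraph correctly identifies the one point requiring care: the countable family $\mathcal A_i$ must recover the full supremum over $B_i$, not merely at a dense set of points; your construction does exactly this, so the base argument $\sup_G v=\sup_G u$ for every open $G$ goes through and yields $v^*=u^*$.
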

The next simple lemma deal with monotone sequences of lower semicontinuous on subsets of $\C^n$.
The easy proof is left to the interested reader.
\begin{lemma}\label{lm2}
Let $X$ be a subset  of  $\C^n$  and  $\{\va_j\}_{j \ge 1}$  be a  sequence  of  lower semicontinuous  functions  on  $X$ that increases  to  a lower  semicontinuous  function  $\va$ on $X$.
Then for every  sequence  $\{a_j\}_{j \ge 1} \subset  X$  with  $a_j  \to  a \in X$ we have
$$\va (a) \le \varliminf\limits_{j\to \infty} \va_j (a_j).$$
\end{lemma}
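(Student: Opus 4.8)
The plan is to play off the pointwise monotone convergence $\varphi_j(a)\uparrow\varphi(a)$ against the lower semicontinuity of a single, suitably chosen member $\varphi_N$ of the sequence near $a$; this is the familiar reason why a pointwise increasing limit of lower semicontinuous functions behaves well along convergent sequences of points. If $\varphi(a)=-\infty$ there is nothing to prove, so I would assume $\varphi(a)>-\infty$ and fix an arbitrary real number $t<\varphi(a)$ (when $\varphi(a)=+\infty$ this just means an arbitrary $t\in\mathbb R$).

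First, since $\varphi_j(a)\uparrow\varphi(a)$ as $j\to\infty$, there is an index $N$ with $\varphi_N(a)>t$. Next, because $\varphi_N$ is lower semicontinuous at $a$, there is a neighborhood $U$ of $a$ in $\mathbb C^n$ such that $\varphi_N(x)>t$ for every $x\in U\cap X$. Since $a_j\to a$, I may choose $J\ge N$ so that $a_j\in U$ for all $j\ge J$; and since the sequence $\{\varphi_j\}$ is increasing, $\varphi_j(a_j)\ge\varphi_N(a_j)>t$ for every $j\ge J$. Hence $\varliminf\limits_{j\to\infty}\varphi_j(a_j)\ge t$, and letting $t\uparrow\varphi(a)$ yields $\varphi(a)\le\varliminf\limits_{j\to\infty}\varphi_j(a_j)$.

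There is no genuine obstacle here: the argument is a one-line diagonal estimate, and the only thing worth noting is that running it for every real $t$ automatically covers the case $\varphi(a)=+\infty$ (it then forces $\varliminf\limits_{j\to\infty}\varphi_j(a_j)=+\infty$), so no separate bookkeeping for infinite values is needed, and the stated lower semicontinuity of the limit $\varphi$ itself is not even used.
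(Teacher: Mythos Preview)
Your argument is correct and is exactly the standard one. The paper does not actually supply a proof of this lemma --- it writes ``The easy proof is left to the interested reader'' --- so there is no alternative approach to compare against; your observation that the lower semicontinuity of the limit $\varphi$ is never used is also accurate.
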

We end up this preparatory section by presenting a useful result which permits approximation of continuous strictly $m-$sh functions by smooth ones. This lemma will be used only in the proof of Theorem \ref{thm1}.
\begin{lemma} \label{rich}
Let $D$ be a domain in $\mathbb C^n.$ Assume that $u$ is a continuous strictly $m-$sh function on $D$. Then for every continuous positive function $h$ on $D$ we can find a smooth strictly $m-$sh function $v$ on $D$ such that 
$u<v<u+h$ on $D$. 
\end{lemma}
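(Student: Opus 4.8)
The statement is the $m$-subharmonic form of Richberg's classical smoothing theorem, and the plan is to imitate Richberg's proof: smooth $u$ locally by convolution, then splice the local smoothings by regularised maxima, the strict $m$-subharmonicity of $u$ being precisely what lets the auxiliary cut-off perturbations be absorbed. Every tool needed is at hand. Convolution preserves $m$-subharmonicity (Lemma~\ref{prop}(a)). Sums of $m$-sh functions are $m$-sh and $PSH(D)\subset SH_m(D)$ (Lemma~\ref{prop}(b),(c)); hence, if $\psi-M\vert z\vert^2$ is $m$-sh and $\phi$ is smooth with $\tfrac{M}{2}\vert z\vert^2+\phi$ plurisubharmonic (for instance if the complex Hessian of $\phi$ is small), then $(\psi+\phi)-\tfrac{M}{2}\vert z\vert^2=(\psi-M\vert z\vert^2)+(\tfrac{M}{2}\vert z\vert^2+\phi)$ is $m$-sh --- a small smooth perturbation of a strictly $m$-sh function is again strictly $m$-sh. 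Finally, the regularised maximum $\max_\ep$ of finitely many $m$-sh functions is $m$-sh, being a supremum of convex combinations of its arguments plus a constant (apply Lemma~\ref{prop}(b),(f)); and, since subtracting $M\vert z\vert^2$ commutes with convex combinations, $\max_\ep$ preserves strict $m$-subharmonicity with a common margin, and it is smooth when its arguments are.

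First I would isolate the local step, which rests on the identity $(\vert z\vert^2)*\rho_\de=\vert z\vert^2+C_\rho\de^2$, where $C_\rho:=\int_{\C^n}\vert z\vert^2\rho\,d\la_{2n}>0$ (this holds because $\rho$ is radial). If $g$ is continuous and strictly $m$-sh on a neighbourhood of a compact $K$, say $g-M\vert z\vert^2$ is $m$-sh there, then for $\de>0$ small the convolution $w:=g*\rho_\de$ is smooth near $K$ and $w-M\vert z\vert^2=(g-M\vert z\vert^2)*\rho_\de+MC_\rho\de^2$ is $m$-sh; so $w$ is smooth and strictly $m$-sh (with margin $M$) near $K$, it decreases to $g$ uniformly on $K$ (Lemma~\ref{prop}(a) and Dini's theorem), and the same identity, together with the sub-mean-value inequality applied to the subharmonic function $g-M\vert z\vert^2$, yields the quantitative gap $w\ge g+MC_\rho\de^2>g$ on $K$. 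Hence $g$ admits, on a neighbourhood of $K$, smooth strictly $m$-sh approximants lying strictly above it, $C^0$-close to it, and $C^\infty$-close to it on any sub-neighbourhood where $g$ is already smooth.

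Then I would globalise along an exhaustion $D_0\subset D_1\subset\cdots$ of $D$ by open sets with $\ov{D_j}$ compact and contained in $D_{j+1}$, building inductively functions $v_j$ that are continuous and strictly $m$-sh on $D$, smooth near $\ov{D_j}$, with $v_j\ge u$ and $v_j=v_{j-1}+(\text{small constant})$ near $\ov{D_{j-2}}$. At stage $j$ I would apply the local step with $g:=v_{j-1}$ and $K:=\ov{D_j}$, obtaining $w_j:=v_{j-1}*\rho_{\de_j}$ with $v_{j-1}+M_j'C_\rho\de_j^2\le w_j\le v_{j-1}+\gamma_j$ near $\ov{D_j}$ (here $M_j'$ is a strict margin for $v_{j-1}$ there and $\gamma_j\to0$ as $\de_j\to0$), and then splice $v_{j-1}$ and $w_j$ by a single regularised maximum of parameter a small multiple of $M_j'C_\rho\de_j^2$, using a cut-off supported in a thin annular neighbourhood of $\partial D_{j-1}$ that misses $\ov{D_{j-2}}$: this produces a smooth strictly $m$-sh $v_j$ near $\ov{D_j}$ that equals $w_j$ near $\partial D_j$ and equals $v_{j-1}$ raised by a constant $\le\gamma_j+M_j'C_\rho\de_j^2=:\ep_j$ near $\ov{D_{j-2}}$; one further downward regularised maximum near $\partial D_j$ then extends $v_j$ continuously, strictly $m$-subharmonically and $\ge u$ to all of $D$. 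If at each stage $\de_j$ is taken so small that $\ep_j<2^{-j}\min_{\ov{D_j}}h$, the $\ep_j$ are summable, so on each $\ov{D_k}$ the sequence is eventually $v_{k+1}$ plus a convergent series of constants; hence $\{v_j\}$ converges in $C^\infty_{\mathrm{loc}}(D)$ to a smooth strictly $m$-sh $v$, and bookkeeping the constants gives $u<v<u+h$ on $D$. (The same scheme may instead be run along a locally finite cover of $D$ by balls.)

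I would not dwell on the routine points --- the convolution identity, the calculus of $\max_\ep$, the cut-off functions. The genuinely delicate point, and the only place where strictness of $u$ is indispensable, is the simultaneous choice of the constants $\de_j,\gamma_j,\ep_j$ at each stage: $\de_j$ must be small enough that $w_j$ is $C^0$-close to $v_{j-1}$ and that the cut-off perturbation has complex Hessian dominated by the strict margin $M_j'$, yet the splicing genuinely exploits the definite gap $w_j-v_{j-1}\ge M_j'C_\rho\de_j^2>0$ in order to force the regularised maximum onto the intended branch on each piece. This is why one smooths the current iterate $v_{j-1}$ rather than $u$ at stage $j$, why the usable gap is the strict-$m$-sh gap produced by $(\vert z\vert^2)*\rho_\de=\vert z\vert^2+C_\rho\de^2$, and why the perturbations must be kept summable; I expect this constant-juggling to be the main obstacle.
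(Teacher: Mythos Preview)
Your proposal is correct and follows exactly the approach the paper indicates: the paper's own proof consists of the single sentence that one may modify Richberg's classical argument (Theorem 1.3 in [B\l1]) using the elementary properties of $m$-sh functions collected in Lemma~\ref{prop}, leaving all details to the reader. You have supplied precisely those details --- convolution preserving strict $m$-subharmonicity via the identity $(\vert z\vert^2)*\rho_\de=\vert z\vert^2+C_\rho\de^2$, regularised maxima, and the exhaustion-and-splicing scheme --- so there is nothing to compare.
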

\begin{proof} In view of Lemma \ref{prop} we may modify easily the original proof of Richberg's theorem in the case of $m=n$ (see Theorem 1.3 in [B\l1]).
The details are left to the interested reader.
\end{proof}
\section{Proofs of the main results}
\begin{proof} ({\it of Theorem \ref{fn}})
We first show that $D$ admits a {\it smooth} strictly $m-$sh exhaustion function which is larger than $\va$.
This will be done by an adaptation of the proof of Theorem 2.6.11 in [H\"o].
For the reader convenience, we indicate some details. For each $j \ge 1$ we let 
$$\va_j (z):=(\va*\rho_{\de_j})(z)+ \de_j \vert z\vert^2.$$
Here $\de_j>0$ is chosen so small that $\va_j$ is smooth and strictly $m-$sh on $D_{j+1}.$ 
Moreover, we may arrange so that $\va_j>\va$ there.
All this is possible in view of Lemma \ref{prop}.
Take a smooth convex increasing function $\chi$ on $\mathbb R$ such that $\chi(t)<0$ for $t<0$ and $\chi'(t)>0$ when
$t>0.$ Then the function $\chi(\va_j-(j-1))$ is positive, smooth and strictly $m-$sh. on the open set 
$D_{j+1} \setminus \ov{D_j}.$
Therefore we may choose inductively positive numbers $\{a_j\}$ such that
the function 
$$\psi_j:=\sum_{l=1}^j a_l \chi (\va_l+1-l)$$
is strictly $m-$sh and $>\va$ on $D_j.$ By the choice of $\chi$ we also have $\psi_{j'}=\psi_{j''}$ on $D_j$ if $j<j'<j''.$ It follows that $\psi:=\lim\limits_{j \to \infty} \psi_j$ is a smooth, strictly $m-$sh function on $D$. Moreover, since $\psi>\va$, we conclude that  $\psi$ exhausts $D.$ 
Next, in view of Lemma \ref{prop} and Richberg's approximation lemma (cf. Lemma \ref{rich}), we may
repeat the proof of Theorem 5.5 in [FN] to produce the desired approximating sequence for $u$. 
The details are omitted.
\end{proof}
\begin{proof}( {\it of Proposition \ref{pro1}}) Obviously $J_{m,z} \subset J^c_{m,z}, \forall z \in D.$ Conversely,
fix $z \in E$ and $\mu \in J^c_{m,z}.$ For every $u \in SH_m^{*} (D)$ we choose a sequence 
$\{u_j\}_{j \ge 1} \subset SH_m^{*} (D) \cap \cv$ that satisfy the conditions (i) and (ii).
Then we have
$$u_j (z)\le \int\limits_{\ov{D}} u_j d\mu, \ \forall j \ge 1.$$
By letting $j \to \infty$ and making use of Fatou's lemma we get
$$u(z)\le \int\limits_{\ov{D}} ud\mu.$$
Thus $\mu \in J_{m,z}$ as desired.
\end{proof}
For the ease of exposition, we introduce the following notation:
For each bounded function $f$ on $\ov{D},$ we set
\begin{equation}\label{envelop}
\begin{aligned}
&S_m f(z):= \sup\{u(z): u \in SH_m^{-}(D), u^* \le f \ \text{on}\ \ \ov D\}, \  z \in D, \\
&S_m^c f(z):= \sup\{u (z): u \in SH_m^{*}(D), u \le f \ \ \text{ on}\ \ \ov D\}, z \in \ov{D}.
\end{aligned}
\end{equation}
\begin{proof} ({\it of Theorem \ref{thm1}})
We split the proof into two steps.

\no  
{\it Step 1.} We will show that there exists a $m-$polar subset $Y$ of $D$ such that for every $f \in \cv$ we have
$$S_m f=S_m^c f \ \text{on}\ D \setminus Y.$$
Choose a countable dense subset $\{f_j\}$ of $\mathcal C(\ov{D})$. Fix $j \ge 1.$
Then from (\ref{envelop}), Edwards' duality theorem and the fact that $J_{m,z}=J^c_{m,z},$ 
for $z \in D \setminus X,$ we obtain
$$S_m f_j=S_m^c f_j \ \forall z \in D \setminus X.$$
Since $f_j$ is continuous on $\ov{D}$, we have $(S_m f_j)^* \le f_j$ on $\ov{D}$.
Hence $S_m f_j=(S_m f_j)^* \in SH_m (D)$. Since $S_m^c f_j=(S_m^c f_j)^*$ a.e. on $D$ we infer that 
$S_m f_j=(S_m^c f_j)^*$ a.e. on $D$.
Since $S_m f_j$ and $(S_m^c f_j)^*$ are subharmonic on $D$ we deduce that
$$S_m f_j=(S_m^c f_j)^* \ \text{on}\ D.$$
Notice that, by Proposition \ref{negligible}, the set $X_j:=\{z: (S_m^c f_j)^* (z)>S_m^c f_j(z)\}$ is $m-$polar. 
Set $Y:=\bigcup Y_j.$ Then $Y$ is $m-$polar and
$$S_m^c f_j=(S_m^c f_j)^* \ \text{on}\ D \setminus Y.$$
Now we choose a subsequence $\{f_{k_j}\}$ that converges uniformly
to $f$ on $\ov{D}.$ Since $S_m f_{k_j}$ (resp. $S_m^c f_{k_j}$) converges uniformly on $\ov{D}$ to
$S_m f$ (resp. $S_m^c f$), we infer that
$S_m f=S_m^c f$ on $D \setminus Y.$ 

\noindent
{\it Step 2.} We will prove that $Y$ has the properties indicated in the theorem. To this end,
we may assume that $Y$ is $G_\delta.$
Fix $u \in SH^{-}_m (D)$.
We now follow closely the arguments in Theorem 3.1 of [DW].
Choose a sequence of real valued continuous functions $\va_j$ on $\ov{D}$ such that $\va_j \downarrow u^*$ on $\ov D.$
Then by  Edwards' duality theorem and the fact that $J_{m,z}=J^c_{m,z},$ 
for every $z \in D \setminus Y$ we infer
$$S_m^c \va_j =S_m\va_j \  \text{on}\   D \setminus Y.$$
Since $\va_j$ is continuous on $\ov{D}$, we have $(S_m\va_j)^* \le \va_j$.
Therefore
$$S_m\va_j=(S_m\va_j)^* \in SH_m^{-} (D) \ \forall j \ge 1.$$
On the other hand, since $S_m^c \va_j$ is lower semicontinuous on $D$ we deduce that $(S_m \va_j)^*$ is continuous at every point in $D \setminus Y.$
It follows that the {\it restriction} of $S_m^c \va_j$ on $D \setminus Y$ is continuous.
Observe also that $u \le S_m^c \va_j  \le \va_j$ on $D$ for every $j,$ so we get $S_m\va_j \downarrow u$ on $D$.
Hence $S_m^c \va_j \downarrow u$ on $D':=D \setminus Y.$
Since $D'$ is a $F_\sigma$ set, there exists
an exhaustion of $D'$ by compact subsets $\{K_j\}_{j \ge 1}.$ 
By Edwards' duality theorem and the assumption that $J_{m,\xi}=\{\delta_\xi\}$ for each 
$\xi \in E$, we infer that $S_m^c \va_j=\va_j$ on $E.$
In particular $S_m^c \va_j$ is continuous on $K'_j:=E \cup K_j.$
For every $j \ge 1,$ by Choquet's lemma \ref{lmC}, we can find a sequence
$\{v_{l, j}\}_{l \ge 1} \subset SH_m^{*}(D)$ that increases to $S_m^c \va_j$ on $\ov D$. 
By Dini's theorem and continuity of $S_m \va_j$ on $K'_j$, the convergence is uniform on 
$K'_j$ as $l \to \infty$. 
Thus we can choose $v_{l(j), j} \in SH_m^{*}(D)$ such that
$$\Vert S_m\va_j -v_{l(j), j} \Vert_{K'_j} \le \fr1{j}, v_{l(j), j} \le \va_j \  \text{on}\  \partial D.$$
It is then easy to check that $u_j:=v_{l(j), j}$ converges pointwise to $u$ on $E \cup D'$ and
$$\varlimsup\limits_{j \to \infty} u_j \le \lim_{j \to \infty} \va_j=u^* \ \text{on}\ \ov D.$$
The proof is thereby completed 
\end{proof}
\noindent
{\bf Remark.} By the same proof as in the one given in Step 2, we can show that for $z \in D$, the equality $J_{m,z}=J^c_{m,z}$ implies that for each $u \in SH_m^{-}(D)$, 
there exists $\{u_j\}_{j \ge 1} \subset SH_m^{*}(D)$ such that $u_j (z)\to u(z)$ and
$\varlimsup\limits_{j \to \infty} u_j \le u$ on $\ov{D}.$
\begin{proof} ({\it of Theorem \ref{thm2}})
After subtracting a large constant and shrinking $U$ we may assume $\sup\limits_{\ov{U} \cap \partial D} u^*<0$. For $\de>0$ we set
$D_\de:= \{z \in D: \text{dist}\ (z,\partial D)>\de\}.$ 
Fix an exhaustion sequence $\{K_j\}$ of $D$ by compact sets. We claim that for each $j \ge 1$, there exists
$\de_j \in (0,1/j)$ such that 
$$(\partial D) \setminus U \subset h_t (D_{\de_j}), \ \forall t\in (1,1+\de_j].$$ 
Indeed, if the claim is false, then there exists a sequence 
$\{x_m\} \subset (\partial D) \setminus U$ but $x_m \not\in h_{a_m} (D_{1/m})$,
where $a_m:=1+1/m.$ Hence
$$x_m=h_{a_m} (y_m), y_m \not\in D_{1/m}.$$
After switching to a subsequence we may assume that
$\{x_m\} \to x^* \in (\partial D) \setminus U$. It follows that $\{y_m\} \to x^*.$
Now we take a sequence $b_m \uparrow 1$ such that for $m$ large enough we have
$$\vert h_{a_m} (y_m)-h_{b_m} (y^*)\vert<\fr1{m}.$$
It follows that $h_{b_m} (y^*) \to x^*$.
Hence $x^* \in \partial D(a),$ which is a contradiction. The claim follows.
Set 
$$u_j(z):= (u*\rho_{\de_j})(z), z \in D_{\de_j}.$$
We may also choose $\de_j$ such that $\de_j>\de_{j+1}$ and that  $K_j \subset h_t (D_{\de_j})$ for all
$t \in (1,1+\de_j]$. 
Then for $z \in K_j \subset D \cap \psi_t(D_{\de_j})$ and $t \in (1,1+\de_j]$
we have
$$\begin{aligned}
\vert u_j \circ h_t^{-1} (z)- u_j(z)\vert & \le 
\int \vert u_j(x)\vert \vert \vert \rho_{\de_j}(h_t^{-1} (z)-x)-\rho_{\de_j} (z-x)\vert d\la_{2n}(x)\\
&\le M_j\Vert u\Vert_{L^1 (K_j)} \vert t-1\vert.
\end{aligned}$$
Here $M_j>0$ is a constant independent of $t.$
Thus, we can choose $t_j \in (1,1+\de_j]$ such that
\begin{equation} \label{est1}
\Vert u_j \circ h_{t_j}^{-1} (z)- u_j(z)\Vert_{K_j}<\fr1{j}.
\end{equation}
Let $\{\va_j\}$ be a sequence of negative continuous functions on $\partial D$ such that $\va_j \downarrow u^*$ on
$\ov{U} \cap \partial D.$
Let $K$ be a closed ball contained in $D$. Consider the envelopes
$$\begin{aligned}
V(z)&:=\sup \{v(z): v \in SH_m^* (D), v \le -\chi_K\}, z \in \ov{D},\\
\Phi_j (z)&:= \sup \{v(z): v \in SH_m^* (D): v|_{\partial D} \le \va_j \}, z \in \ov{D}.
\end{aligned}$$
Using Edwards' duality theorem and the assumptions (a) and (b) we obtain
\begin{equation} \label{jensen}
\Phi_j=\va_j \ \text{and}\  V=0 \  \text{on}\ \partial D \cap (\ov{U} \setminus E).
\end{equation}
\begin{equation} \label{jensen0}
\Phi_j \le S_1^c \va_j  =\va_j \ \text{on}\ \partial D \cap \ov{U}.
\end{equation}
Since $V^* \in SH_m (D)$ and since $V^*=-1$ on the interior of $K$,
by the maximum principle we have $V \le V^*<0$ on $D.$ Now using Choquet's lemma, we may choose sequences
$\{v_k\} \in SH_m^* (D)$ and $\va_{k,j} \in SH_m^* (D) $
with $\va_{k,j} \uparrow \Phi_j$ and $v_k \uparrow V$ on $\ov{D}.$
Moreover, by the assumption, there exists $\psi \in SH_m^{-}(D')$ with $\psi|_E \equiv -\infty,$ where $D'$ is some open \nhd\ of $\ov{D}.$
Then for fixed $j \ge 1$ we claim that there exist $l_j \ge 1, \de'_j \in (0,\de_j)$ such that for $t \in (1,1+\de'_j]$
we have
$$v_{l_j}(\xi)+\fr1{j} \va_{l_j,j} (\xi) \ge \fr1{j}\varlimsup\limits_{z \to \xi} 
(u_j \circ h_t^{-1})(z)+\fr1{j^2}(\psi*\rho_{\de'_j})(\xi)-\fr1{j^2}\ \ \forall 
\xi \in \ov{D} \cap \va_t (\partial D_{\de'_j}),$$
where $u_j (z):= (u*\rho_{\de'_j})(z), z \in D_{\de'_j}$.
If this is false then we can find sequences $k_l \uparrow \infty, \de'_l \downarrow 0, t_l \downarrow 1$ and 
points $\{\xi_l\}$ such that
$\xi_l \in \ov{D} \cap h_{t_l} (\partial D_{\de'_l})$ and
$$v_{k_l}(\xi_l)+\fr1{j} \va_{k_l,j} (\xi_l)<\fr1{j}(u_{j_l} \circ h_{t_l}^{-1})(\xi_l)+\fr1{j^2}(\psi*\rho_{\de'_j})(\xi_l)-\fr1{j^2}.$$
After passing to a subsequence, we may assume that $\{\xi_l\}$ converges to $\xi^* \in \partial D(a) \subset U.$
On one hand, by Lemma \ref{lm2} we have
$$\varliminf\limits_{l\to\infty} \va_{k_l, j}(\xi_l) \ge \Phi_j(\xi^*) \ \text{and}\ 
\varliminf\limits_{l\to\infty} v_{k_l}(\xi_l) \ge V(\xi^*).$$
On the other hand, 
$$\begin{aligned}
\fr1{j}\varlimsup\limits_{l\to \infty}(u_{j_l} \circ h_{t_l}^{-1})(\xi_l)+\fr1{j^2}\varlimsup\limits_{l\to \infty}(\psi*\rho_{\de'_j})(\xi_l) &\le \fr1{j}u^*(\xi^*)+\fr1{j^2}\psi(\xi^*)\\
<\fr1{j}\va_j (\xi^*)+\fr1{j^2}\psi(\xi^*).
\end{aligned}$$
Putting all this together, using (\ref{jensen}) and the fact that $\psi|_E=-\infty$, we obtain a contradiction. 
The claim is proved.
Furthermore, using Dini's theorem and (\ref{jensen}) again we may choose $l_j$ so large such that
\begin{equation} \label{jensen1}
v_{l_j} \ge -\fr1{j^2} \  \text{on the compact set}\ (\ov{U} \cap \partial D) \setminus \{\psi<-j\}.
\end{equation}
This implies that the function $\tilde u_j$ defined by
$$\tilde u_j:=\begin{cases}
\max \{u_{l_j} \circ h_{\de'_j}^{-1}+\fr1{j}\psi*\rho_{\de_j}-\fr1{j}, jv_{l_j}+\va_{l_j,j}\}&\ \text{on}\ \ov{D} \cap h_{\de'_j} (D_{\de'_j})\\
jv_{l_j}+\va_{l_j,j} & \ \text{on}\ \ov{D} \setminus h_{\de'_j} (D_{\de'_j}),
\end{cases}$$
belongs to $SH_m^* (D)$. Now we set
$$L_j:=\{z \in K_j: u(z) \ge -j, \psi(z) \ge -j\}.$$
Then $L_j$ is a compact subset of $K_j.$ Set $E':=\{z \in D': \psi(z)=-\infty\}.$
Now we claim that $\tilde u_j \to u$ on $D \setminus E'.$
Indeed, given $z_0 \in D$ with $\psi(z_0)>-\infty$. Consider first the case where $u(z_0)>-\infty.$ 
Then $z_0 \in L_j$ for $j$ large enough. 
Since $$jv_{l_j}(z_0) \le jV(z_0) \to -\infty \ \text{as}\ j \to \infty,$$
from (\ref{est1}) we infer that for $j$ sufficiently large
$$\tilde u_j (z_0)=(u_{l_j} \circ h_{\de'_j}^{-1})(z_0)+\fr1{j}()\psi*\rho_{\de_j})(z_0)-\fr1{j}.$$
Therefore, using again (\ref{est1})
and the fact that $u_j (z_0)\downarrow u(z_0)$ 
we see that $\tilde u_j (z_0)\to u(z_0)$ as $j \to \infty.$
On the other hand, if $u(z_0)=-\infty$ then by the same reasoning we have 
$(u_{l_j} \circ h_{\de'_j}^{-1})(z_0) \to -\infty.$ Hence 
$\lim\limits_{j \to \infty}\tilde u_j(z_0)=-\infty=u(z_0).$ This proves (i).
For (ii), we first note that if $z \in \ov{D}$ with $V(z)<0$ then
$$\varlimsup\limits_{j \to \infty} (jv_{l_j} (z)+\va_{l_j,j}(z)) \le 
\varlimsup\limits_{j \to \infty} jV(z)=-\infty.$$
Thus, the preceding proof yields that $\varlimsup\limits_{j\to \infty} \tilde u_j (z)\le u(z).$
For $z \in \ov{D}$ with $V(z)=0$ we have $z \in (\partial D) \cap U.$ It follows, using (\ref{jensen0}) that 
$$\varlimsup\limits_{j \to \infty} (jv_{l_j} (z)+\va_{l_j,j}(z)) \le 
\varlimsup\limits_{j \to \infty} \va_{l_j,j}(z) \le \varlimsup\limits_{j \to \infty} \Phi_j (z) 
\le \varlimsup\limits_{j \to \infty} \va_j (z)\le u^* (z).$$
This proves (ii). Next,
we fix $x \in (\partial D) \setminus E'$ such that 
$u$ is continuous at $x.$ If $x \not\in U$, then $V(x)<0$, so
by the same reasoning as above we get
$$\tilde u_j (x)=(u_{l_j} \circ h_{\de'_j}^{-1})(x)+\fr1{j}(\psi*\rho_{\de_j})(x)-\fr1{j} \to u(x)\  \text{as}\ 
j \to \infty.$$
For the case $x \in U$ we observe that 
$$\varliminf\limits_{j \to \infty} (jv_{l_j} (x)+\va_{l_j,j}(x))=
\varliminf\limits_{j \to \infty} \va_{l_j,j}(x) \ge 
\varliminf\limits_{k \to \infty}(\varliminf\limits_{j \to \infty}  \va_{k,j}(x))=u(x).$$
Hence we get (iii).
Finally, we note that if $u \in SH_m^{-} (D)$ then so is $\tilde u_j$. Thus, if $\partial D(a)=\emptyset$ we may choose $U=\emptyset.$ It follows that 
$\varlimsup\limits_{j\to \infty} u_j \le u$ on $\ov{D}$. Hence, by applying Proposition \ref{pro1} we get that
$J_{m, \xi} (D)=J^c_{m, \xi}$  for all $\xi \in D.$
\end{proof}
\vskip0,2cm
\noindent
For the proof of Corollary \ref{coro} we need the following lemma
\begin{lemma} \label{localpeak}
Let $D$ be as in Corollary \ref{coro} and $\xi \in \partial D$. Then
$J^c_{m,\xi}(D)=\{\de_\xi\}.$ 
\end{lemma}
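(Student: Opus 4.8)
The plan is to deduce the statement from the Remark following the definition of a local $m$-sh.\ barrier (the one observing that such a barrier forces $J^c_{m,\xi}=\{\delta_\xi\}$), after first reducing to a single $B_m$-regular factor. Write $D=D_1\cap\cdots\cap D_N$. Since $D\subset D_i$ and $\ov D\subset\ov{D_i}$, every $v\in SH_m^*(D_i)$ restricts to an element of $SH_m^*(D)$: $v$ is the restriction to $\ov{D_i}$ of a continuous $m$-sh.\ function on a \nhd\ of $\ov{D_i}$, hence on a \nhd\ of $\ov D$, and $v\le 0$ on $\ov{D_i}\supset\ov D$. Consequently, if $\mu\in\mathcal B(\ov D)$ lies in $J^c_{m,\xi}(D)$, then, regarding $\mu$ as a measure on $\ov{D_i}$ carried by $\ov D$, we get $v(\xi)\le\int_{\ov{D_i}}v\,d\mu$ for all $v\in SH_m^*(D_i)$, i.e.\ $\mu\in J^c_{m,\xi}(D_i)$. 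Because $\xi\in\partial D$, there is $i_0$ with $\xi\in\partial D_{i_0}$ (otherwise $\xi$ lies in every $D_i$, hence in $D$); and since $\delta_\xi\in J^c_{m,\xi}(D)$ always, it suffices to prove the lemma for the single bounded $B_m$-regular domain $G:=D_{i_0}$ with $\mathcal C^1$ boundary, at the point $\xi\in\partial G$.

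For $G$ I would first produce a \emph{weak} barrier. By $B_m$-regularity, solve the Dirichlet problem on $G$ with continuous boundary datum $-|z-\xi|^2$, obtaining $w\in SH_m(G)\cap\mathcal C(\ov G)$ with $w|_{\partial G}=-|z-\xi|^2$; by the maximum principle for subharmonic functions $w\le 0$ on $\ov G$, $w(\xi)=0$, and $w<0$ on $\ov G\setminus\{\xi\}$. So $w$ is a barrier for $G$ at $\xi$, but it is defined only on $\ov G$, whereas the Remark quoted above requires an $m$-sh.\ function on a full \nhd\ of $\xi$ in $\C^n$ — equivalently, a barrier belonging to $SH_m^*(G)$.

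The step I expect to be the main obstacle is to \textbf{localize $w$ across the $\mathcal C^1$ boundary}. Near $\xi$ choose a $\mathcal C^1$ defining function $\rho$ for $G$ (so $G=\{\rho<0\}$ locally, $d\rho\ne 0$). The plan is to glue $w$, across $\partial G$, with a suitable $m$-sh.\ function on the exterior side — assembled from $|z|^2$, an affine (hence pluriharmonic, hence $m$-sh.) term, and a regularization of $\rho$ — matching $w$ continuously along $\partial G\cap\{\rho=0\}$, staying $\le 0$ on $\ov G$ with equality only at $\xi$, and lying \emph{above} $0$ on the exterior side; the last feature is unavoidable, since an $m$-sh.\ function on a ball about $\xi$ that is $\le 0$ on $\ov G$ and vanishes at $\xi$ must be positive somewhere outside $\ov G$ (otherwise the strong maximum principle, applied at the interior point $\xi$, makes it $\equiv 0$, contradicting $w<0$ on $\ov G\setminus\{\xi\}$). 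One then invokes the gluing Lemma \ref{prop}(g) to see the resulting function is $m$-sh.\ on a \nhd\ of $\xi$. Equivalently, this localization amounts to recovering $w$ (the Perron--Bremermann envelope of the datum $-|z-\xi|^2$) as an upper envelope of $m$-sh.\ functions defined on \nhd s of $\ov G$ — an $m$-subharmonic, $\mathcal C^1$-boundary analogue of the Fornaess--Narasimhan / Fornaess--Wiegerinck approximation theorems; it is exactly here that the $\mathcal C^1$ hypothesis on $\partial G$ enters. The difficulty is that the natural candidate $-|z-\xi|^2$ is itself not $m$-sh.\ (its $dd^c$ equals $-\om$), so unlike the cases of subtracting a constant or of the functions $H_m$ in Lemma \ref{prop}(j), the extension past the boundary must be built carefully rather than written down.

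Finally, once $\xi$ admits a local $m$-sh.\ barrier, I would conclude as in the Remark after the definition of a local $m$-sh.\ barrier: shrink the \nhd\ so that the barrier is $<0$ on the part of $\ov G$ near the boundary of the \nhd, glue with a constant via Lemma \ref{prop}(g) to get a continuous $m$-sh.\ function on a \nhd\ of $\ov G$, regularize it by convolution to get $\mathcal C^\infty$ $m$-sh.\ functions on \nhd s of $\ov G$ decreasing to it on $\ov G$, insert these into the Jensen inequality for a given $\mu\in J^c_{m,\xi}(G)$, and let the index tend to infinity using Fatou's lemma; this yields $J^c_{m,\xi}(G)=\{\delta_\xi\}$, and hence $J^c_{m,\xi}(D)=\{\delta_\xi\}$ by the reduction of the first paragraph.
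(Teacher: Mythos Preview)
Your reduction in the first paragraph (restricting $SH_m^*(D_i)$ to $\ov D$ and thereby passing from $J^c_{m,\xi}(D)$ to $J^c_{m,\xi}(D_{i_0})$) is clean and correct, and is in fact simpler than the localization argument the paper uses in its Step~2. The gap is exactly where you flag it: you do not actually produce elements of $SH_m^*(G)$ peaking at $\xi$. Your proposed gluing of $w$ across $\partial G$ with an exterior $m$-sh.\ piece is not something the gluing lemma (Lemma~\ref{prop}(g)) supports; that lemma patches $\max\{u,v\}$ on an open set $U$ with $u$ on the complement under the one-sided condition $\varlimsup v\le u$ on $\partial U\cap D$, it does not let you sew two functions along a hypersurface where they merely agree. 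So the ``main obstacle'' remains unresolved.

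The paper sidesteps this entirely. It does \emph{not} try to extend the weak barrier $u$ to an $m$-sh.\ function on a full \nhd\ of $\xi$ in $\C^n$. Instead it exploits the $\mathcal C^1$ smoothness of $\partial D_j$ to \emph{translate} $u$ along the outward normal: for a small ball $\mathbb B_1$ about $\xi$ one has $\ov{\mathbb B_1\cap D_j}\subset D_j+\ve\bold n$ for all small $\ve>0$, so $u_\ve(z):=u(z-\ve\bold n)$ is $m$-sh.\ on a \nhd\ of $\ov{D'}$, where $D':=D_j\cap\mathbb B_1$. Then $u_\ve*\rho_\de\in SH_m^*(D')$ for small $\de$, and inserting these into the Jensen inequality for $\mu\in J^c_{m,\xi}(D')$ and letting $\de\downarrow 0$, $\ve\downarrow 0$ (Fatou) forces $\mu=\de_\xi$. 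Thus $J^c_{m,\xi}(D')=\{\de_\xi\}$ for the local piece $D'$; a second step then globalizes from $D'$ to $D$ (alternatively, your first-paragraph reduction combined with this translation argument applied to $G=D_{i_0}$ already gives the full statement once you also localize from $G$ to $G\cap\mathbb B_1$). The point is that the $\mathcal C^1$ hypothesis is used for a normal translation, not for an extension across the boundary.
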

\begin{proof}
We split the proof into two steps.

\no 
{\it Step 1.} We first show that $J^c_{m,\xi} (D \cap U)=\{\de_\xi\}$ for some small \nhd\ $U$ of $\xi.$ 
To see this, we write
$D=D_1 \cap \cdots \cap D_k$, where $D_1, \cdots, D_k$ are $B_m-$regular domains with 
$\mathcal C^1-$ smooth boundaries.
Then $\xi \in \partial D_j$ for some $1 \le j \le k.$
Since $D_j$ is $B_m-$regular, we can find $u \in SH_m (D_j) \cap \mathcal C(\ov{D_j})$
such that $u(\xi)=-1$ and $u<-1$ on $\ov{D_j} \setminus \{\xi\}.$ 
Since $\partial D_j$ is $\mathcal C^1-$smooth, we may find $\ve>0$ and
open balls $\mathbb B_1 \subset \mathbb B_2$ such that
$$\ov{\mathbb B_1 \cap D_j} \subset (\mathbb B_1 \cap D)+\ve \bold n, \ \forall \ve \in (0,\ve_0),$$
where $\bold n$ is the unit outward normal to $\partial D$ at $\xi.$
We claim that $J^c_{m,\xi} (D')=\{\de_\xi\}$, where $D':=D_j \cap \mathbb B_1.$
For this, we 
set $u_\ve(z):= u(z-\ve \bold n).$ Then $u_\ve$ is $m-$sh on a \nhd\ of $\ov{D'}$ for each $\ve \in (0, \ve_0).$ 
Fix $\mu \in J^c_{m,\xi} (D').$ Then for $\de>0$ small enough we have
$$(u_\ve *\rho_\de) (\xi) \le \int\limits_{\ov{D'}} (u_\ve *\rho_\de)\mu.$$
By letting $\de \downarrow 0$ and $\ve \downarrow 0$ and using Fatou's lemma we obtain
$-1 \le \int\limits_{\ov{D'}} ud\mu.$
This forces $\mu=\de_\xi$ as claimed. 

\no 
{\it Step 2.} Fix $\nu \in J^c_{m,\xi}(D)$ we will show that $\nu=\{\de_\xi\}.$
Consider the upper envelope
$$\va(z):=\sup\{u(z): u \in SH_m^c (D'): u \le h \ \text{on}\ \ov{D'}\},$$
where $h(z):=-\vert z-\xi\vert$. Then by Edwards' duality theorem and the result obtained in the previous step we obtain $\va(\xi)=0$ whereas $\va<0$ on $\ov{D'} \setminus \{\xi\}.$
Thus, by Choquet's lemma, we can find a sequence $\{u_k\} \subset SH_m^c (D')$ such that $u_k \uparrow \va$ as $k \to \infty.$ In particular $u_k \le h$ on $\ov{D'}.$ 
Hence, we can find $\de>1$ such that 
$u_k<-\de$ on $\partial D' \cap D.$ Define for each $k$ the function
$$\tilde u_k:=\begin{cases}
-\de \ & \text{on}\ \ov{D} \setminus D'\\ 
\max \{u_k, -\de\} & \text{on}\ D'.
\end{cases}$$
By Lemma \ref{prop}, we see that $\tilde u_k \in SH_m^c (D)$. Thus we have
$$\tilde u_k (\xi) \le \int\limits_{\ov{D}} u_kd\nu.$$
By letting $k \to \infty$ and using Lebesgue monotone convergence theorem we infer that $\nu=\{\de_\xi\}.$
This completes the proof of the lemma.
\end{proof}
\begin{proof} {(\it of Corollary \ref{coro})}
In view of the above lemma, we may apply Theorem \ref{thm2} to $U=\mathbb C^n, E=\emptyset$ 
and $a$ is an arbitrary point of $D$ to get the desired conclusions.
\end{proof}
\begin{proof} {(\it of Corollary \ref{coro1})}
First we show that $\partial D(a)$ is included in $K_a.$
Indeed, fix $\al \in \partial D(a).$ Then we can find a sequence $t_j \uparrow 1$ and a sequence $p_j \in \partial D$ such that 
$$\ov{D} \ni \al_j:=h(t_j, p_j)\to a.$$ 
Recall that $h_t (t,z)=h_t (z)=t(z-a)+a.$ 
Set $t'_j:=1/t_j>1.$
It follows that  $g(h(t'_j,\al_j))=0$ whereas $g(h(1,\al_j)) \le 0.$ Hence, there exists 
$t''_j \in (1, t'_j)$ such that 
$$\begin{aligned}
0 &\le \frac{\partial}{\partial t} f(h(t,\al_j))\big \vert_{t=t''_j}\\
&=2 \Re \Big ((\al_1-a) \frac{\partial f}{\partial z_1}\circ h(t''_j, \al_1)+\cdots+(\al_n-a) \frac{\partial f}{\partial z_n}\circ h(t''_j, \al_n)\Big).
\end{aligned}$$
By letting $j \to \infty$ we see that $\al \in K_a.$
Next, fix $\xi=(\xi_1,\cdots,\xi_n) \in (U \cap \partial D) \setminus E$, 
we will show that $J^c_{m,c} (\xi)=\{\de_\xi\}.$
To see this, it suffices to construct a {\it local} $m-$sh barrier of $D$ at $\xi.$
Since $\Om$ is $B_m-$regular and $\partial \Om$ is $\mathcal C^1-$smooth, by Lemma 3.1 we may assume that $\xi \in \Om.$ Since $f$ is strictly $m-$sh. on $U$, after shrinking $U$ and multiplying $f$ with a constant we may assume
$$f(z)=\vert z\vert^2 +g(z),$$
where $g \in SH_m (U) \cap \mathcal C^1 (U).$ Notice that $g(\xi)=-\vert \xi\vert^2<0.$
For $z \in U$ we define
$$u_\xi (z)=\Re (z_1^2\ov{\xi_1^2}+\cdots+z_n^2\ov{\xi_n^2})-g(z)g(\xi).$$
By the hypothesis, 
we see that $u_\xi$ is continuous $m-$sh on $U$ and $u_\xi (\xi)=0$.
Moreover, for $\tilde \xi \in \ov{D} \setminus \{\xi\},$
by Cauchy-Schwarz's inequality we obtain
$$u_\xi (\tilde \xi) \le \vert \tilde \xi\vert^2 \vert z\vert^2-g(z)g(\tilde \xi) \le 0.$$
Here the equality occurs only if $\tilde \xi \in \partial D$ and 
$\tilde \xi=t\xi$ for some constant $t \ge 0, t \ne 1,$ i.e., $\tilde \xi \in l_\xi.$
This is impossible in view of (ii).  Hence, $u_\xi$ is indeed a local $m-$sh. barrier at $\xi$.
The proof is thereby completed.
\end{proof}
\noindent
The proof of Theorem \ref{thm3} requires the following fact.
\begin{lemma} \label{alpha}
Let $\xi \in \partial D$ be a boundary point that admits
a local $m-$sh. barrier.
Let $\va<0$ be a continuous function on $\partial D.$
Then for every sequence $\{\xi_j\} \subset D$ with $\xi_j \to \xi$  we have
$$\varlimsup\limits_{j \to \infty} S_m^c \va (\xi_j) \le \va (\xi),$$
where $\va$ is extended to a lower semicontinuous function on $\ov{D}$ by setting $\va:=+\infty$ on $D.$
\end{lemma}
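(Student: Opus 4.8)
\textit{Proof proposal.} The plan is to bound $S_m^c\va$ on all of $\ov D$ by an explicit continuous superharmonic function $w$ whose value at $\xi$ is as close to $\va(\xi)$ as we like, and then pass to the limit along $\xi_j\to\xi$ using the continuity of $w$. First I would globalize the barrier: starting from the local $m-$sh.\ barrier at $\xi$ and repeating verbatim the argument in Remark 1 following the definition of a local $m-$sh.\ barrier (shrink the \nhd\ so that the barrier is strictly negative on the part of $\ov D$ meeting the boundary sphere, then glue in a negative constant via Lemma \ref{prop}(g)), I obtain a function $\rho$ that is continuous and $m-$sh.\ on an open \nhd\ of $\ov D$, with $\rho(\xi)=0$, $\rho<0$ on $\ov D\setminus\{\xi\}$ and $\rho\le 0$ on $\ov D$. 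By Lemma \ref{prop}(c), $\rho$ is subharmonic, so $-\rho$ is a continuous, nonnegative superharmonic function on $D$ with $(-\rho)(\xi)=0$ and $\inf_{\ov D\setminus W}(-\rho)>0$ for every \nhd\ $W$ of $\xi$ (the compact set $\ov D\setminus W$ avoids $\xi$, where $-\rho$ is continuous and strictly positive).

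Next I would fix $\ve>0$ and build $w$. Using continuity of $\va$ at $\xi$, choose an open \nhd\ $W$ of $\xi$ with $\va<\va(\xi)+\ve$ on $W\cap\partial D$. Put $\delta_W:=\inf_{\ov D\setminus W}(-\rho)>0$, let $C$ be any finite upper bound of $\va-\va(\xi)-\ve$ on the compact set $\partial D$, set $M:=\max\{0,C/\delta_W\}$, and define $w:=\va(\xi)+\ve+M(-\rho)$. Then $w\in\cv$ is superharmonic on $D$, $w(\xi)=\va(\xi)+\ve$, and $w\ge\va$ on $\partial D$: on $W\cap\partial D$ because $-\rho\ge 0$ and $\va<\va(\xi)+\ve$ there, and on $\partial D\setminus W$ because $M(-\rho)\ge M\delta_W\ge C\ge\va-\va(\xi)-\ve$.

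Then I would compare. Let $u\in SH_m^*(D)$ be arbitrary with $u\le\va$ on $\ov D$; since $\va\equiv+\infty$ on $D$ this only constrains $u$ on $\partial D$, where $u\le\va\le w$. Both $u$ and $-w$ are subharmonic on $D$ (for $u$ this is Lemma \ref{prop}(c)) and continuous on $\ov D$, so $u-w$ is subharmonic on $D$, continuous on $\ov D$, and $\le 0$ on $\partial D$; by the maximum principle for subharmonic functions, $u\le w$ on $\ov D$. Taking the supremum over all such $u$ and recalling the definition (\ref{envelop}) of $S_m^c$, we get $S_m^c\va\le w$ on $\ov D$. Consequently $\varlimsup_{j\to\infty}S_m^c\va(\xi_j)\le\varlimsup_{j\to\infty}w(\xi_j)=w(\xi)=\va(\xi)+\ve$, and letting $\ve\downarrow 0$ yields the assertion.

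I expect the only delicate point to be the first step: producing the barrier $\rho$ globally on a \nhd\ of $\ov D$ and — the real issue — recording that it is subharmonic up to the boundary, so that the ordinary maximum principle (rather than only an $m$-potential-theoretic one) applies to $u-w$. Everything after that is a one-line comparison. An alternative route would go through Edwards' duality (Theorem \ref{thmEd}): the existence of a local $m-$sh.\ barrier forces $J^c_{m,\xi}=\{\delta_\xi\}$, and a weak-$^*$ compactness argument then controls $\int_{\ov D}\va\,d\mu$ for $\mu\in J^c_{m,\xi_j}$; but the direct comparison above is shorter and avoids Jensen measures entirely.
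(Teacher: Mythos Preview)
Your comparison-via-barrier approach is a natural alternative to the paper's Jensen-measure argument, but Step~1 has a genuine gap. The construction in Remark~1 (gluing $\max\{u,-\de\}$ to the constant $-\de$) only produces an \emph{upper semicontinuous} $m$-sh.\ function on a \nhd\ of $\ov D$: the local barrier $u\in SH_m(U)$ in the definition is merely usc, and $\max\{u,-\de\}$ inherits no more regularity. Everything through the maximum principle still goes through with $\rho$ only usc (then $u+M\rho$ is usc subharmonic on $D$ with nonpositive boundary values), so you do obtain $S_m^c\va(\xi_j)\le\va(\xi)+\ve-M\rho(\xi_j)$. The last line, however, fails: you need $\varlimsup_j\bigl(-\rho(\xi_j)\bigr)\le 0$, i.e.\ $\varliminf_j\rho(\xi_j)\ge 0$, and upper semicontinuity at $\xi$ gives only $\varlimsup_j\rho(\xi_j)\le 0$. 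Nothing prevents $\rho$ from staying bounded away from $0$ on $D$ near $\xi$ (positivity of $\rho$ just outside $\ov D$ keeps the sub-mean-value inequality consistent with $\rho(\xi)=0$). So the ``delicate point'' is continuity of $\rho$ at $\xi$, not its subharmonicity up to the boundary.

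The paper's proof is precisely the alternative you outline at the end: once one has the (usc) global barrier $\tilde u$, its smoothings $\tilde u*\rho_\de$ are continuous and can be tested against any $\mu_j\in J^c_{m,\xi_j}$ supported on $\partial D$; letting $\de\downarrow 0$ and invoking Fatou forces every weak-$^*$ cluster point of $\{\mu_j\}$ to be $\de_\xi$, after which Edwards' duality and continuity of $\va$ on $\partial D$ finish the job. Your direct route can be repaired---for instance, replace $\rho$ by continuous $v_k\in SH_m^*(D)$ increasing to $S_m^c(-|\,\cdot-\xi|)$ via Choquet's lemma; since $v_k\le -|z-\xi|$ the constant $M$ can be fixed independently of $k$, while $v_k(\xi)\uparrow 0$ by Edwards and $J^c_{m,\xi}=\{\de_\xi\}$---but as written the argument is incomplete.
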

\begin{proof} Let $u$ be a local $m-$sh. barrier at $\xi.$
By the argument given in the remark following Theorem \ref{thm2}, we may extend $u$ to a $m-$sh. function $\tilde u$ on a \nhd\ of $\ov{D}$ such that $\tilde u(\xi)=0$ while $\tilde u<0$ on 
$\ov{D} \setminus \{\xi\}.$
Let $\{\mu_j\}_{j \ge 1}, \mu_j \in J_{m,z_j}$ be a sequence of Jensen measures with compact support
in $\partial D.$ We claim that $\mu_j$ converges to $\delta_\xi$ in the weak $^*-$ topology.
Let $\mu^*$ be a cluster point of $\{\mu_j\}$. Then for $\de>0$ small enough and $j \ge 1$ we have
$$(\tilde u *\rho_\de)(z_j) \le \int\limits_{\partial D} (\tilde u *\rho_\de)d\mu_j.$$
By letting $\de \downarrow 0$ and then $j \to \infty$ we infer that
$\mu^* =\delta_\xi.$ This proves the claim.
It follows that
$$\varlimsup\limits_{j \to \infty} S_m^c \va (z_j) \le \varlimsup\limits_{j \to \infty} \int_{\partial D} \va d\mu_j \le \va(\xi).$$
This is the desired conclusion.
\end{proof}
\begin{proof}  ({\it of Theorem \ref{thm3}})
We split the proof in two two parts.

\n
{\it Existence.} After subtracting a large constant we may assume $\va<0$ on $\partial D.$ Define 
$$\begin{aligned}
&S_m\va (z):=  \sup\{u(z): u \in SH_m^{-}(D), u \le \varphi \ \ \text{on}\ \ \partial D\}, \  z \in D; \\
&S_m^c \va (z):= \sup\{u(z): u \in SH_m^{*}(D), u \le  \varphi \ \ \text{on}\ \  \partial D\}, z \in \ov{D}.
\end{aligned}$$
Then by Lemma 2.1 (f),  $u:=(S_m^c \va)^* \in SH_m^{-} (D)$.
In view of the assumption (a) and the remark following Theorem \ref{thm2} we also have $J^c_{m,\xi}=\{\de_\xi\}$ for every 
$\xi \in (\partial D) \setminus K.$
So using Edwards' duality theorem (with $\va:=+\infty$ on $D$) we obtain
\begin{equation}\label{i1}
S_m^c \va =\va \  \text{on}\ ( \partial D) \setminus K.
\end{equation}
Furthermore, by Theorem \ref{thm2} we get $J_{m,z}=J^c_{m,z}$ for every $z \in D.$ 
So applying again Edwards' duality theorem we get
\begin{equation}
S_m\va =S_m^c \va \ \text{on}\  D.
\end{equation}
On the other hand, by the assumption (ii) and Lemma \ref{alpha} 
we get
\begin{equation} \label{i2}
\varlimsup\limits_{z \to \xi, z \in D} u(z) \le \va(\xi), \ \forall \xi \in (\partial D)\setminus K.
\end{equation}
Fix $\ve>0$ and set $u_\ve:= u+\ve v.$
Then we infer from the last inequality and the assumption that $(u_\ve)^* \le \va$ on $\partial D.$ This implies that $u_\ve \le S_m \va$ on $D.$
By letting $\ve \downarrow 0$ and noting that $v>-\infty$ on $D$ we get
$$S^c_m \va \le u \le S_m \va=S_m^c \va \ \text{on}\  D.$$ 
Hence $u=S_m^c \va$ on $D$. In particular $u$ is lower semicontinuous on $D$. Therefore
$u \in SH_m(D) \cap \mathcal C(D)$ and $\Vert u\Vert_D \le \Vert \va\Vert_{\partial D}.$
Next we claim that $u$ has the right boundary values off $K$. Indeed, fix $\xi \in (\partial D) \setminus K$ and a 
sequence $\xi_j \to \xi, \xi_j \in D.$
By the lower semicontinuity on $\ov{D}$ of $S_m^c \va$ and (\ref{i1}), (\ref{i2}) we have
$$\va (\xi) =S_m^c \va(\xi) \le \varliminf\limits_{j \to \infty} u(\xi_j) \le  \varlimsup\limits_{j \to \infty} u(\xi_j) \le \va(\xi) .$$
It follows that $\lim\limits_{z \to \xi}u(z)=\va(\xi).$ This proves our claim. 
By the same argument as the one given at the end of the proof of Theorem 1.4 (applying Choquet's lemma and then Dini's theorem on an exhaustion sequence of compact subsets of $\ov{D} \setminus K$) we conclude that 
$u$ may be approximated uniformly on compact sets of $\ov{D} \setminus K$ by elements in $SH_m^*(D).$
Finally, for maximality of $u$, let $w \in SH_m (D)$ with $w \le u$ on $D \setminus U$ for some open set $U$ relatively compact in $D.$
Then the function
$$\tilde u(z):= \begin{cases}
\max \{u(z),  w(z)\},  & z \in U \\
u(z) & z \in D \setminus U
\end{cases}$$
belongs to $SH_m(D)$. Moreover, $\tilde u$ is a member in the defining family for $S_m\va$. Therefore
$\tilde u \le S_m \va =u$ on $D$. In particular, $w \le u$ on $U.$  This proves the existence of the desired solution.

 \n
{\it Uniqueness.} Assume that there exist bounded continuous maximal functions 
$u_1, u_2 \in SH_m (D)$ on $D$ such that
$$\lim\limits_{z \to \xi, z \in D} u_1(z)=\lim\limits_{z \to \xi, z \in D} u_2 (z)=\va (\xi), \ \forall \xi \in (\partial D)\setminus K.$$
Let $\{D_j\}_{j \ge 1}$ be a sequence of relative compact open subset of $D$ with $D_j \uparrow D.$
Fix $\ve>0$, since $u_2$ is bounded from below on $D$
we can find $j_0 \ge 1$ so large such that
$$u_1+\ve v-\ve \le u_2 \ \text{on}\  D \setminus  D_{j _0}.$$
It follows  from maximality of $u_2$ that
$$u_1+\ve v-\ve \le u_2 \ \text{on}\   D.$$
By letting $\ve \downarrow 0,$ we infer that $u_1 \le u_2$ on $D$. 
Similarly we also get $u_2 \le u_1$ on $D$. Therefore $u_1=u_2$ on $D.$

The theorem is proved.
\end{proof}
\n
{\bf Remark.} If we do not assume that $v>-\infty$ on $D$ then a slight modification of the above proof (similar to the proof of Theorem 1.3)  gives a maximal $m-$sh function $u$ on $D$ with boundary values $\va$ (on $(\partial D)\setminus K$ which is only continuous at every point $z \in D$ with $v(z)>-\infty.$
\vskip1,5cm
\cen {\bf References}

\noindent
[ACH] P. Ahag, R. Czyz and L. Hed, {\it Extension and approximation of $m-$subharmonic functions}, preprint
https://arxiv.org/pdf/1703.03181.pdf.

\n
[B\l1] Z. B\l ocki, {\it The complex Monge-Amp\`ere operator in hyperconvex domains},
Annali della Scuola Normale Superiore di Pisa, {\bf 23} (1996), 721-747.

\noindent
[B\l2] Z. B\l ocki, {\it Weak solutions to the complex Hessian equation}, Ann. Inst. Fourier (Grenoble)
{\bf 55} (2005) 1735-1756.

\n [Ed] D. A. Edwards, {\it Choquet boundary theory for certain spaces of lower semicontinuous
functions}, in Function Algebras (Proc. Internat. Symposium on Function Algebras,
Tulane Univ., 1965) (Scott-Foresman, Chicago, 1966), pp. 300-309.

\n
[DW]  N.Q. Dieu and F. Wikstr\"om, {\it Jensen measures and approximation of plurisubharmonic functions}, Michigan Math. J. {\bf 53} (2005) 529-544.

\n
[Di] Nguyen Quang Dieu, {\it Approximation of plurisubharmonic functions on bounded domains in $\mathbb C^n$}, Michigan Math. J. {\bf 54}, (2006) 697-711.

\n
[FN] J. E. Fornaess and R. Narasimhan, {\it The Levi problem on complex spaces with singularities},
Math. Ann. {\bf 248} (1980) 47-72.

\noindent
[Kl] M. Klimek, {\it Pluripotential Theory}, Oxford 1991.

\noindent
[Lu] C.H. Lu, {\it A variational approach to complex Hessian equations in $\mathbb C^n$}, J. Math. Anal. Appl. 
{\bf 431} (2015), 228-259.

\noindent
[SA] A. Sadullaev and B. Abdullaev, {\it Potential theory in the class of m-subharmonic},  Proc. Steklov Inst. Math. 
{\bf 279} (2012), 155-180.  

\n
[Si] N. Sibony, {\it Une classe de domaines pseudoconvexes}, Duke Math. J. {\bf 55} (1987), 299-319.

\n
[Wik]  F. Wikstr\"om, {\it Jensen measures and boundary values of plurisubharmonic functions,}
Ark. Mat. {\bf 39} (2001) 181-200.
%\end{document} 

\end{document}